\documentclass[graybox]{svmult}

\usepackage[varvw]{newtxmath}
\usepackage{newtxtext, framed, color}
\usepackage{type1cm}        

\usepackage{graphicx}        
\usepackage{multicol}        
\usepackage{hyperref}  
\newcommand{\hl}[1]{#1}

\newcommand{\T}{\mathbb{T}} 
\newcommand{\C}{\mathbb{C}} 
\newcommand{\DD}{\mathbb{D}} 
\newcommand{\ints}{\mathbb{Z}} 
\newcommand{\R}{\mathbb{R}}
\newcommand{\cd}{\overline{\DD}} 
\newcommand{\uhp}{\mathbb{H}}

\begin{document}

\institute{Washington University in St. Louis\\ Department of Mathematics\\ One Brookings Drive\\ St. Louis, MO 63130 USA\\ \email{geknese@wustl.edu}}

\keywords{Rational inner functions, Blaschke products, stable polynomials, 
rational singularities, von Neumann inequality, multivariable operator theory,
sums of squares, interpolation}


\title{Rational inner functions on the polydisk --- a survey}

\author{Greg Knese}

\thanks{Partially supported by NSF grant DMS \#2247702}

\date{\today}

\abstract{Rational inner functions are a generalization
of finite Blaschke products to several variables.
In this article we survey a variety of results about
rational inner functions 
related to interpolation, sums of squares formulas, 
and boundary behavior.  We mostly focus
on two variables however in the final section
we discuss higher dimensions.}

\maketitle

\tableofcontents

\section{Introduction}

\hl{Rational inner functions} are a multivariable generalization of finite
\hl{Blaschke products}.  Blaschke products are fundamental in one complex variable
especially in the case of operator related function theory
on the unit disk.  There has even been an entire book
written about them \cite{BBook}.
While rational inner functions in several variables
cannot possibly exhibit the centrality that finite Blaschke products
possess, they represent an exciting area of study overlapping
with the theory of stable polynomials, multivariable operator theory,
function spaces on domains in several variables, 
and real algebraic geometry.  We hope to illustrate this
in the present survey by reviewing connections to interpolation,
sums of squares formulas connected to operator theory,
and the study of boundary behavior of analytic functions. 
A great deal is known at this point, especially in two variables, 
but there remains a lot to be discovered.

Recall that a finite Blaschke product is a function $B$, analytic on the unit disk
$\DD = \{z \in \C: |z|<1\}$
of the form
\[
B(z) = \mu \prod_{j=1}^{n} \frac{z-a_j}{1-\bar{a}_j z}
\]
where $a_1,\dots, a_n \in \DD$ and $\mu$ belongs to the unit circle $\T = \{z \in \C: |z|=1\}$.
Finite Blaschke products are of course rational, analytic on $\DD$, and they are \emph{inner}, meaning
\[
|B(z)| = 1 \text{ for almost every } z \in \T.
\]
The use of ``almost every'' here is unnecessary because $B$ is analytic
on a neighborhood of the closed unit disk.  
This motivates our definition
of a rational inner function in several variables.  Our focus will be on
the domain 
\[
\DD^d = \{z= (z_1,\dots, z_d) \in \C^d: |z_1|,\dots, |z_d|<1\}
\]
called the (unit) \emph{polydisk} in $\C^d$
and its distinguished boundary
\[
\T^d =\{z = (z_1,\dots, z_d) \in \C^d: |z_1| =\dots = |z_d|=1\}
\]
called the $d$-torus.

\begin{definition}
Given polynomials $p,q \in \C[z_1,\dots, z_d]$ with no common factors, 
where $p(z) \ne 0$
for $z\in \DD^d$, the function $f = q/p$ is a \hl{\emph{rational inner function}}
if $|f(z)| = 1$ for almost every $z \in \T^d$.
\end{definition}

The first theorem in the subject is the following
characterization of rational inner functions.
 It helps
to define a natural reflection of a polynomial $p \in \C[z_1,\dots, z_d]$
to be
\[
\tilde{p}(z) = z^{n} \overline{p(1/\bar{z}_1,\dots, 1/\bar{z}_d)}
\]
where $n = \deg(p) := (\deg_{z_1}(p),\dots, \deg_{z_d}(p))$ is the multi-degree
of $p$.  We are using multi-index notation to define $z^n$.

\begin{theorem}\label{RIFchar}
Given $p,q \in \C[z_1,\dots, z_d]$ with no common factors, 
where $p(z) \ne 0$
for $z\in \DD^d$, the function $f = q/p$ is a \emph{rational inner function}
if and only if 
$q(z)$ is of the form
\[
a z^{m} \tilde{p}(z)
\]
where $a\in \T$, $m \in \ints_{\geq 0}^d$ is a $d$-tuple of 
non-negative integers.
\end{theorem}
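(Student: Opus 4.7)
The plan is to convert the unimodularity $|q/p|=1$ a.e.\ on $\T^d$ into a polynomial identity and then exploit unique factorization in $\C[z_1,\dots,z_d]$. On the torus we have $\bar z_j = 1/z_j$, so for any polynomial $r$ with multi-degree $\deg r$,
\[
\overline{r(z)} = \tilde r(z)/z^{\deg r} \qquad (z \in \T^d).
\]
Writing $n := \deg p$ and $m := \deg q$, the equality $|q(z)|^2 = |p(z)|^2$ on $\T^d$ (which holds a.e., hence everywhere by continuity) becomes $z^n q(z)\tilde q(z) = z^m p(z)\tilde p(z)$ on $\T^d$. Because $\T^d$ is a uniqueness set for polynomials (induct on the number of variables using the fact that a single-variable polynomial vanishing on the unit circle must be zero), this identity lifts to an equality in the polynomial ring.

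Next, $0 \in \DD^d$ gives $p(0) \neq 0$, so $p$ is coprime to each $z_j$. Together with $\gcd(p,q)=1$, reading the identity as $p \mid z^n q \tilde q$ yields $p \mid \tilde q$. Write $\tilde q = p h$; substituting and cancelling $p$ gives
\[
z^n q h = z^m \tilde p.
\]
The hypothesis $p(0)\neq0$ also forces the coefficient of $z^n$ in $\tilde p$ to be $\overline{p(0)}\neq 0$, so $\deg\tilde p = n$. Comparing multi-degrees on the two sides of the display forces $\deg h = 0$, so $h = c$ is a nonzero constant. Since $\deg_{z_j} p = n_j$, the leading $z_j$-coefficient of $p$ (a nonzero polynomial in the remaining variables) becomes the $z_j$-constant-term of $\tilde p$, so $\tilde p$ is not divisible by any $z_j$. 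Taking $z_j$-adic valuations in $c z^n q = z^m \tilde p$ then gives $v_{z_j}(q) = m_j - n_j \ge 0$ for each $j$; hence $m \ge n$ coordinate-wise, and after cancelling $z^n$ we recover $q = a z^{m-n} \tilde p$ with $a := 1/c$.

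Finally, evaluating $|q(z)|=|p(z)|$ at any $z \in \T^d$ where $p(z)\neq 0$ and using $|\tilde p(z)| = |p(z)|$ together with $|z^{m-n}| = 1$ yields $|a| = 1$. The converse is a direct verification: if $q = a z^k \tilde p$ with $a\in\T$ and $k \in \ints_{\ge 0}^d$, the same identities show $|q/p| = 1$ on $\T^d$, while $f = q/p$ is rational on $\DD^d$ because $p$ is non-vanishing there. The main subtlety I expect will be tracking the possible monomial factors $z_j^{k_j}$ in $q$ (which are allowed, since $q$ may vanish at $0$ while $p$ cannot): this asymmetry is precisely what produces the shift $z^{m-n}$ in the formula, and it forces the use of $z_j$-adic valuations rather than only the overall multi-degree count.
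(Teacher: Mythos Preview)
Your proof is correct and follows essentially the same strategy as the paper: convert $|q|=|p|$ on $\T^d$ into the polynomial identity $z^n q\tilde q = z^m p\tilde p$, then use unique factorization together with $\gcd(p,q)=1$ to peel off $p$. The only real difference is in how the leftover factor is pinned down. The paper writes $z^n\tilde q = pg$, observes $|g|=1$ on $\T^d$, and deduces $g\tilde g = z^k$, forcing $g$ to be a unimodular monomial. You instead exploit $p(0)\neq 0$ (which the paper never invokes) to get $p\mid \tilde q$ directly, then use $\deg\tilde p = n$ and a clean multidegree count to force $h$ to be constant, followed by a $z_j$-adic valuation check to recover the monomial shift. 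Your route is slightly more algebraic and avoids the second passage through the torus; the paper's route is more self-contained in that it does not need the observation $p(0)\neq 0$ and would survive even if $p$ were only assumed nonvanishing on an open subset of $\DD^d$ containing no coordinate hyperplane intersection. Both are short and neither is clearly superior.
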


The converse is a consequence of $|\tilde{p}| = |p|$ on $\T^d$.
Thus, every rational inner function is of the
form 
\[
z^m \frac{\tilde{p}(z)}{p(z)}
\]
for $p$ with no zeros in $\DD^d$ and
no factors in common with $\tilde{p}$ (the constant
$a$ can be absorbed into the polynomials).
We can also simply write the rational inner function as
$\frac{\tilde{p}}{p}$ by reflecting $p$ at a degree higher
than its multidegree.  In this way we have a simple uniform way of
writing rational inner functions. 
We shall give essentially Pfister's proof \cite{Pfister} in the next section. 
With Theorem \ref{RIFchar},
 it is easy to construct examples of rational inner functions
(that is, if one can easily construct polynomials with
no zeros on $\DD^d$.)

\begin{example}
For convenience, when working in two variables
we often write $(z,w)$ instead of $(z_1,z_2)$.
The polynomials $p_0(z,w) = 2-z - w$ and $p_1(z,w) = 3-z-w$
both have no zeros in $\DD^2$ and have no factors in common with
their reflections
\[
\tilde{p}_0(z,w) = 2zw - z- w \qquad \tilde{p}_1(z,w) = 3zw-z-w
\]
and therefore both lead to rational inner functions
\[
f_0(z,w) = \frac{2zw - z- w}{2-z-w} \quad f_1(z,w) = \frac{3zw-z-w}{3-z-w}.
\]
Since $p_1$ has no zeros on the closed bidisk $\cd^2$, $f_1$ extends
analytically to a neighborhood of $\cd^2$.
On the other hand, $p_0$ has a zero at $(1,1)$
and $f_0$ does not even extend continuously to $\cd^2$.
To see this, we first note that $f_0$ extends continuously
to $\cd^2\setminus \{(1,1)\}$ since $2-z-w = 0$ for 
$|z|,|w|\leq 1$ if and only if $z=w=1$.
Now, consider for each $a \in \R$, the limit
\[
\lim_{\theta \to 0} f_0(e^{i\theta}, e^{-i\theta + i a \theta^2}) 
= \lim_{\theta \to 0} \frac{ia \theta^2+ \theta^2 + O(\theta^3)}{-ia\theta^2 + \theta^2 + O(\theta^3)}
=
\frac{1+ia}{1-ia}
\]
which is an arbitrary element of $\T$ excluding $-1$. $\diamond$
\end{example}

Thus, the most basic difference between one and several variables
is that rational inner functions in two or more variables can
have boundary singularities.
The example $f_0$ above is often called the ``favorite RIF'' because it
is the simplest rational inner function with a boundary singularity
and it exhibits a variety of interesting phenomena.

Having introduced the concept of rational inner functions, 
why are we interested in them?  
First, polynomials with no zeros on the polydisk,
often called stable polynomials,
are of broad interest in a variety of areas of math
and rational inner functions offer a vehicle for studying
them.
Second, the \hl{Schur class} of the polydisk
consists of the analytic functions on the polydisk
bounded by one and rational inner functions
approximate Schur class functions locally
uniformly.  This makes rational inner functions
an attractive family of functions to use in tackling
a variety of problems because unlike with polynomials
the property of being bounded by one is assured.
An prominent example of this is the fact that one
can prove a sum of squares formula for rational
inner functions in two variables that
implies an operator inequality for bounded
analytic functions in two variables called \hl{And\^{o}'s inequality}
as well as Agler's Pick interpolation theorem
on the bidisk.  

Once rational inner functions have proven their worth 
we become open to studying them for their own sake.  
A lot of work has been done in recent years to understand
the level sets of rational inner functions and the behavior
of their singularities.  Some of this is motivated
by operator theory, some by pure function theory, 
and some by the natural quest for an understanding
of the behavior of the zero set of a stable polynomial
near a ``boundary'' zero.

\section{Basic facts}
We begin by establishing some basic facts
about rational inner functions.  
We hope it is worthwhile to the reader to 
see some details about the fundamentals---
we will be more sketchy about more modern 
developments.
The paper of Pfister \cite{Pfister} is probably the
first serious study of rational inner functions
and many of the results below are from this paper
as well as Rudin's book \cite{RudinBook}.

\begin{lemma} 
If $p \in \C[z_1,\dots, z_d]$ is nonzero, then
integrating with respect to normalized Lebesgue
measure we have
\[
\int_{\T^d} \log |p| > -\infty
\]
and in particular, the zero set of $p$ on $\T^d$
has measure zero.
\end{lemma}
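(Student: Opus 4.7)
My plan is to induct on the number of variables $d$, using Jensen's formula in the base case and Fubini in the inductive step, and then deduce the measure-zero statement as a corollary.

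For the base case $d=1$, factor $p(z) = c \prod_{j=1}^{n}(z-r_j)$ with $c \neq 0$. Jensen's formula gives $\int_{\T} \log|z - r_j|\, dm = \max(\log|r_j|, 0)$, so
\[
\int_{\T} \log|p| \, dm = \log|c| + \sum_j \max(\log|r_j|, 0) \geq \log|c| > -\infty.
\]
Note the key inequality $\int_{\T} \log|p| \geq \log|\text{leading coefficient}|$, which I will reuse in the inductive step.

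For the inductive step, suppose the lemma holds in $d-1$ variables. Write $p(z_1,\dots,z_d) = \sum_{k=0}^{n} a_k(z') z_d^k$ where $z' = (z_1,\dots,z_{d-1})$ and $a_n \not\equiv 0$ is a nonzero polynomial in $d-1$ variables (choose $n$ to be the degree of $p$ in $z_d$). By the inductive hypothesis, the zero set of $a_n$ on $\T^{d-1}$ has measure zero, so for almost every $z' \in \T^{d-1}$ the polynomial $p(z',\cdot)$ is a nonzero polynomial in $z_d$ with leading coefficient $a_n(z')$. Applying the one-variable bound fiber by fiber,
\[
\int_{\T} \log|p(z',z_d)| \, dm(z_d) \geq \log|a_n(z')| \quad \text{for a.e.\ } z' \in \T^{d-1}.
\]
Integrating over $\T^{d-1}$ via Fubini (noting $\log|p|$ is bounded above on $\T^d$ since $p$ is continuous there, so the integrals are well defined in $[-\infty,\infty)$), we obtain
\[
\int_{\T^d} \log|p| \, dm \geq \int_{\T^{d-1}} \log|a_n| \, dm > -\infty,
\]
where the last inequality is the inductive hypothesis applied to $a_n$.

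For the measure-zero statement: if the zero set $Z = \{z \in \T^d : p(z) = 0\}$ had positive measure, then $\log|p|$ would equal $-\infty$ on a set of positive measure. Combined with the upper bound $\log|p| \leq \log\|p\|_\infty < \infty$, this would force $\int_{\T^d}\log|p| = -\infty$, contradicting what we just proved. The main subtlety I anticipate is keeping the Fubini step honest when $a_n(z')$ vanishes; but since that exceptional set in $\T^{d-1}$ is null by induction, it contributes nothing to the integral and the argument goes through cleanly.
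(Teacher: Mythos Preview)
Your proof is correct. The paper takes a slightly different route: it invokes an iterated Jensen--Poisson inequality
\[
\int_{\T^d} \log|p(z)|\,P_a(z)\,d\sigma(z) \geq \log|p(a)|
\]
with the product Poisson kernel $P_a$, and then chooses $a\in\DD^d$ with $p(a)\neq 0$. From there the finiteness of the unweighted integral follows since $P_a$ is bounded above and below on $\T^d$. Your argument instead inducts on $d$, peeling off one variable at a time via the one-variable Jensen bound $\int_\T \log|p(z',\cdot)| \geq \log|a_n(z')|$ in terms of the leading coefficient. Both proofs ultimately rest on iterating one-variable Jensen, but the paper packages the iteration into a single Poisson-weighted inequality evaluated at an interior point, while you unwind it explicitly and obtain a concrete lower bound in terms of a specific coefficient of $p$. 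Your version is arguably more elementary (no Poisson kernel needed) and yields slightly more quantitative information; the paper's version is terser and avoids the bookkeeping of the inductive step.
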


\begin{proof}
One can prove an iterated Jensen-Poisson inequality 
\[
\int_{\T^d} \log|p(z)| P_a(z) d\sigma(z) \geq \log|p(a)|
\]
where $d\sigma$ is normalized Lebesgue measure, $a \in \DD^d$, 
$P_a(z) = \prod_{j=1}^{d} \frac{1-|a_j|^2}{|1-\bar{a}_j z_j|^2}$
is the Poisson kernel.
Then, since $p$ is not identically zero we can find $a\in \DD^d$
such that $p(a)\ne 0$ yielding
\[
\int_{\T^d} \log|p(z)| P_a(z) d\sigma(z) > -\infty
\]
and hence $\int_{\T^d} \log|p| >-\infty$. \qed
\end{proof}

\begin{proof}[Proof of Theorem \ref{RIFchar}]
Given $f= q/p$ with $|q/p|=1$ almost everywhere on $\T^d$
we have $|q|=|p|$ almost everywhere on $\T^d$.
Let $n = \deg p, m= \deg q$.
Then,
\[
z^n q(z) \tilde{q}(z) = z^m p(z) \tilde{p}(z)
\]
almost everywhere on $\T^d$.
However, two polynomials that agree almost everywhere
on $\T^d$ are necessarily identical on $\C^d$ (for instance, 
their Fourier coefficients are the polynomials coefficients).
Since $p$ and $q$ are assumed to have no 
common factors we must have $p(z)$
divides $z^n \tilde{q}(z)$.  Writing
$z^n \tilde{q}(z) = p(z) g(z)$
for some factor $g$, we have 
\[
|z^n \tilde{q}(z)| = |q(z)| = |p(z)| = |p(z)||g(z)|
\]
on $\T^d$.  Therefore, $|g(z)| = 1$ almost everywhere.
Also, since $p$
has multidegree $n$, the multidegree of $g$
is the same as $\tilde{q}$.
Set $k = \deg \tilde{q} \leq \deg q$.    
We have $\tilde{g}(z) g(z) = z^k$ on $\C^d$. 
Since $g(z)$ has multidegree $k$, we see that $g(z)$ is a multiple of $z^k$,
say $g(z) = a z^k$. We necessarily have $|a| = 1$.  
Therefore, $z^n \tilde{q}(z) = a z^k p(z)$.
Reflecting at degree $m+n$ we have
$q(z) = \bar{a} z^{m-k} \tilde{p}(z)$,
concluding the proof. \qed
\end{proof}

\begin{proposition}
Rational inner functions are bounded by $1$ in $\DD^d$.
\end{proposition}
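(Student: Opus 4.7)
The plan is to reduce to the case where the denominator $p$ has no zeros on the \emph{closed} polydisk $\overline{\DD^d}$, where the maximum modulus principle applies directly, and then pass to the limit via a dilation approximation.

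First, I would invoke Theorem \ref{RIFchar} to write $f(z) = a z^m \tilde{p}(z)/p(z)$ with $a \in \T$ and $m \in \ints_{\geq 0}^d$. The identity $|\tilde{p}(\zeta)| = |p(\zeta)|$ holds \emph{pointwise} on $\T^d$, since $1/\bar{\zeta}_j = \zeta_j$ on the torus makes this immediate from the definition of $\tilde{p}$. Combined with $|z^m| = 1$ on $\T^d$, this yields $|f(\zeta)| = 1$ at every $\zeta \in \T^d$ with $p(\zeta) \neq 0$.

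If $p$ has no zeros anywhere on $\overline{\DD^d}$, then $f$ is analytic on a neighborhood of $\overline{\DD^d}$ and $|f| = 1$ on all of $\T^d$. Iterating the one-variable maximum modulus principle in each coordinate (so that the maximum of $|f|$ on $\overline{\DD^d}$ is attained on the Shilov boundary $\T^d$) gives $|f| \leq 1$ on $\DD^d$.

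For general $p$, I would introduce the dilates $p_r(z) = p(rz)$ for $r \in (0,1)$. Any zero of $p_r$ has the form $\zeta/r$ for a zero $\zeta$ of $p$, hence has some coordinate of modulus $\geq 1/r > 1$; so $p_r$ has no zeros on $\overline{\DD^d}$. Setting $f_r(z) = a z^m \widetilde{p_r}(z)/p_r(z)$, the previous paragraph yields $|f_r| \leq 1$ on $\DD^d$. The main technical point I anticipate is that reflection does not commute with dilation in the naive way $\widetilde{p_r}(z) \stackrel{?}{=} \tilde{p}(rz)$; a short coefficient computation instead gives $\widetilde{p_r}(z) = r^{|n|}\tilde{p}(z/r)$ with $|n| = n_1 + \cdots + n_d$, from which $f_r(z) \to f(z)$ as $r \to 1^-$ for every $z \in \DD^d$, and the bound $|f_r| \leq 1$ passes to the limit.
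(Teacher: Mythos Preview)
Your proof is correct and follows essentially the same approach as the paper: apply the maximum principle when $p$ is zero-free on $\overline{\DD^d}$, then reduce the general case to this one via the dilation $p_r(z) = p(rz)$ together with the identity $\widetilde{p_r}(z) = r^{|n|}\tilde{p}(z/r)$, and let $r \to 1^-$. The paper phrases this as proving $|\tilde{p}(z)| \leq |p(z)|$ on $\DD^d$ directly rather than tracking the full $f_r$, but the mechanics are identical.
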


\begin{proof}
It is enough to show $|\tilde{p}(z)| \leq |p(z)|$ in $\DD^d$.
If $p$ has no zeros on $\cd^d$,
this is immediate by the maximum principle applied to $\tilde{p}/p$.
Otherwise, we consider $0<r<1$ and $p_r(z) = p(rz)$ for which
$\widetilde{p_r}(z) = z^n \overline{p(r/\bar{z})} = r^{|n|} \tilde{p}(z/r)$
and
\[
|r^{|n|}\tilde{p}(z/r)| \leq |p(rz)|.
\]
Sending $r\to 1$ we see $|\tilde{p}(z)| \leq |p(z)|$. \qed
\end{proof}

The following theorem was proven by Carath\'eodory in one variable;
the several variable version was probably first stated and proved by Pfister \cite{Pfister}; 
see also \cite{RudinBook} (Theorem 5.5.1).

\begin{proposition}\label{carathm}
Let $f:\DD^d \to \C$ be analytic and $|f|\leq 1$.
Then, for any $r<1$ and $\epsilon>0$, there exists a rational inner function $\phi$
such that $|f(z) - \phi(z)| <\epsilon$ for $z \in r\cd^d$.
\end{proposition}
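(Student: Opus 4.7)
The plan is a two-step approximation: first approximate $f$ by a polynomial $P$ with $\|P\|_{\cd^d} < 1$, then approximate $P$ by an explicit rational inner function built from $P$.

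For the first step, fix $s \in (r,1)$ and set $f_s(z) := f(sz)$. Then $f_s$ is holomorphic in a neighborhood of $\cd^d$, satisfies $\|f_s\|_{\cd^d} \leq 1$, and $f_s \to f$ uniformly on $r\cd^d$ as $s \to 1^-$. Truncating the Taylor series of $f_s$ at the origin gives a polynomial $P_0$ with $\|P_0 - f_s\|_{\cd^d}$ as small as desired, and dividing by a factor slightly larger than $1$ yields $P$ with $\|P\|_{\cd^d} < 1$ strictly and $\|P - f\|_{r\cd^d} < \epsilon/2$.

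For the second step, with $n = \deg P$ consider, for each positive integer $N$,
\[
\phi_N(z) := \frac{P(z) + z_1^N z_2^N \cdots z_d^N \, z^n}{1 + z_1^N z_2^N \cdots z_d^N \, \tilde P(z)}.
\]
A short calculation confirms that the denominator is the reflection of the numerator at multidegree $n + (N,\dots,N)$; equivalently, writing $p(z) := 1 + z_1^N \cdots z_d^N \tilde P(z)$, the numerator equals $\tilde p(z)$, so by Theorem \ref{RIFchar} we have $|\phi_N| \equiv 1$ on $\T^d$ as soon as $p$ is non-vanishing on $\DD^d$. The latter follows from the polydisk maximum modulus principle applied to the holomorphic polynomial $z_1^N\cdots z_d^N \tilde P(z)$: its supremum on $\cd^d$ is attained on $\T^d$, where $|z_1^N\cdots z_d^N| = 1$ and $|\tilde P| = |P|$, so
\[
\|z_1^N\cdots z_d^N \, \tilde P\|_{\cd^d} = \|P\|_{\T^d} = \|P\|_{\cd^d} < 1,
\]
which yields $|p(z)| \geq 1 - \|P\|_{\cd^d} > 0$ on $\cd^d$. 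Hence each $\phi_N$ is a rational inner function. On $r\cd^d$ we have $|z_1 z_2 \cdots z_d|^N \leq r^{dN}$, so the ``correction'' terms $z_1^N\cdots z_d^N z^n$ and $z_1^N\cdots z_d^N \tilde P$ tend to $0$ uniformly as $N\to\infty$. Consequently $\phi_N \to P$ uniformly on $r\cd^d$, and taking $N$ large completes the proof.

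The main obstacle is guessing the correct formula for $\phi_N$: the numerator and denominator must form a reflection pair at a common multidegree (to give $|\phi_N| \equiv 1$ on $\T^d$), the denominator must be a stable polynomial, and the two polynomials must differ from $P$ and $1$ respectively by terms that vanish locally uniformly on $\DD^d$ as $N\to\infty$. The monomial twist by $(z_1 \cdots z_d)^N$ handles all three requirements simultaneously, once one recognizes that $\|\tilde P\|_{\cd^d} = \|P\|_{\cd^d}$.
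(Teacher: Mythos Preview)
Your proof is correct and follows essentially the same approach as the paper: approximate $f$ by a polynomial $P$ with $\|P\|_{\cd^d}<1$, then use the explicit rational inner function $\phi_N(z) = (P(z)+z^{N+n})/(1+z^N\tilde P(z))$, which is exactly the paper's construction (with $N$ the diagonal multi-index $(N,\dots,N)$). The only cosmetic difference is that you force $\|P\|_{\cd^d}<1$ by dividing by $1+\delta$, whereas the paper obtains it directly from the maximum principle (noting $\sup_{\cd^d}|f_s|<1$ for $s<1$).
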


\begin{lemma}\label{polyapprox}
Let $f:\DD^d \to \C$ be analytic and $|f|\leq 1$.
Then, for any $r<1$ and $\epsilon>0$, there exists a polynomial $P$
such that $|f(z) - P(z)| <\epsilon$ for $z \in r\cd^d$ and $|P(z)|<1$ for $z \in \cd^d$.
\end{lemma}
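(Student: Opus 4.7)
The plan is to combine three standard approximations: a dilation, a contraction, and a Taylor truncation. First, for $\rho \in (0,1)$ I would set $f_\rho(z) := f(\rho z)$. Since $\rho<1$, this function is analytic on $(1/\rho)\DD^d$, which is a neighborhood of $\cd^d$. Pick any $r' \in (r,1)$; since $f$ is uniformly continuous on the compact set $r'\cd^d \subset \DD^d$, choosing $\rho$ sufficiently close to $1$ (so in particular $\rho z \in r'\cd^d$ whenever $z \in r\cd^d$) forces $|f(z) - f_\rho(z)| < \epsilon/3$ for all $z \in r\cd^d$.

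Second, fix $t \in (0,1)$ with $1-t < \epsilon/3$ and set $g := t f_\rho$. Then $|g(z)| \leq t < 1$ on $\cd^d$, while $|f_\rho(z) - g(z)| = (1-t)|f_\rho(z)| < \epsilon/3$ for every $z \in \cd^d$.

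Third, because $g$ is analytic on a neighborhood of $\cd^d$, its Maclaurin series converges to $g$ uniformly on $\cd^d$. Choose a partial sum $P$ with $|g(z) - P(z)| < \min\{\epsilon/3,\, (1-t)/2\}$ on $\cd^d$. Then on $\cd^d$ we have
\[
|P(z)| \leq |g(z)| + |g(z) - P(z)| < t + (1-t)/2 < 1,
\]
while for $z \in r\cd^d$ the triangle inequality gives
\[
|f(z) - P(z)| \leq |f(z) - f_\rho(z)| + |f_\rho(z) - g(z)| + |g(z) - P(z)| < \epsilon,
\]
completing the proof.

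No single step is technically subtle; the only care needed is the bookkeeping of constants so that both conclusions flow from one choice of $P$. The main conceptual point---and what I would flag as the only place one could slip up---is that the intermediate contraction by $t$ is genuinely needed because we require the strict inequality $|P|<1$. Without the cushion $(1-t)/2$ between $|g|$ and $1$ on the boundary, the Taylor truncation error could push $|P|$ slightly above $1$ at some point of $\cd^d$, and the approximating polynomial would no longer satisfy the normalization needed for the Carath\'eodory-type approximation of Proposition \ref{carathm}.
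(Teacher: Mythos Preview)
Your proof is correct and follows essentially the same three-step strategy as the paper: dilate to gain analyticity past $\cd^d$, create a strict gap $\sup|g|<1$, then truncate the Taylor series. The only cosmetic difference is that the paper obtains the gap via a second dilation $f_s$ together with the maximum principle (yielding $\sup_{\cd^d}|f_s|=A<1$) rather than your explicit scalar contraction $g=tf_\rho$.
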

\begin{proof}
For each $r<1$, write $f_r(z) = f(rz)$.  
For fixed $r<1$ there exists $s<1$ such that $|f_r(z)-f_{rs}(z)|<\epsilon/2$
for $z \in \cd^d$ since $f_r$ is uniformly continuous on $\cd^d$.  
Note that $\sup_{\cd^d} |f_s| = A<1$ by the maximum principle.
Choose a Taylor polynomial $P$ of $f_s$ so that $|f_s(z)-P(z)|<\min(1-A, \epsilon/2)$
for $z \in \cd^d$.  Then, $|P(z)| <1$ for $z \in \cd^d$ and
\[
|f_r(z)- P_r(z)| \leq |f_r(z)-f_{rs}(z)| + |f_{rs}(z) - P_r(z)| < \epsilon \text{ for } z \in \cd^d. \text{ \qed}
\]
 \end{proof}

\begin{proof}[Proof of Proposition \ref{carathm}]
Choose $P$ as in Lemma \ref{polyapprox}. 
It is enough to show that we can approximate $P$ locally
uniformly by a rational inner function.
Let $n = \deg P$ be the multidegree of $P$.
Since $|P|=|\tilde{P}|$ on $\T^d$, the maximum principle
implies $|\tilde{P}(z)| <1$ on $\cd^d$.
Let $N \in \mathbb{Z}^d_{>0}$.  
Then, $1+z^N \tilde{P}(z)$ has no zeros in $\cd^d$
and multidegree $n+N$.
Reflecting with respect to degree $n+N$ we see
\[
\phi(z) = \frac{P(z) + z^{N+n}}{1+ z^N \tilde{P}(z)}
\]
is a rational inner function.
Note
\[
\phi(z) = P(z) + z^N \frac{z^n-\tilde{P}P}{1+z^N \tilde{P}}
\]
and we can make the right-most expression arbitrarily small
if one of the components of $N$ is large enough since $z \in r\cd^d$.\qed
\end{proof}

\section{Interpolation} \label{interp}

The classical \hl{Pick interpolation theorem} gives necessary
and sufficient conditions when the following interpolation problem
can be solved: given $z_1,\dots, z_n\in \DD$ and $t_1,\dots, t_n \in \DD$
when does there exist analytic $f:\DD\to \DD$ with $|f|\leq 1$
such that $f(z_j) = t_j$ for $j=1,\dots, n$?
The answer is if and only if 
\begin{equation}\label{pickmatrix}
\left( \frac{1-t_j\bar{t}_k}{1-z_j \bar{z}_k} \right)_{j,k} \text{ is positive semi-definite.} 
\end{equation}
This theorem can be addressed entirely using
rational inner functions (in this case, finite Blaschke products).
Indeed, it is possible to show \eqref{pickmatrix}
implies that we have a finite Blaschke product
that interpolates,
and conversely, by the approximation result, Proposition \ref{carathm},
we only need to verify necessity for rational inner functions.

Very briefly, one way to prove sufficiency is to factor 
\[
\frac{1-t_j\bar{t}_k}{1-z_j \bar{z}_k}  = \langle u_j, u_k\rangle
\]
for some vectors $u_j$ in $\C^N$ (where $N$ is the rank of the positive semi-definite matrix).
After a process called a ``lurking isometry'' one can find a function of the form
\[
\phi(z) =A + z B(I-zD)^{-1}C
\]
that interpolates.
Here $U = \begin{pmatrix} A & B\\ C & D \end{pmatrix}$ is a block unitary matrix
with $A$ a $1\times 1$ matrix and $D$ an $N\times N$ matrix.
It turns out functions of this form are finite Blaschke products.
The lurking isometry argument is given for instance in \cite{PickBook}.

To prove necessity of \eqref{pickmatrix} for finite Blaschke products, 
one approach uses ideas from \hl{orthogonal polynomials}.
Given a polynomial $p\in\C[z]$ with
no zeros on $\cd$ and degree $n$, 
we consider the measure $d\mu = \frac{1}{|p|^2} d\sigma$
on $\T$.  
One can show that in $L^2(\mu)$, $p \perp z,\dots, z^n$
and $\tilde{p} \perp 1,\dots, z^{n-1}$.
If $q_0,\dots, q_{n-1}$ is an orthonormal basis for $\text{span}\{z^j:j=0,\dots,n-1\}$
in $L^2(\mu)$, then
$q_0,\dots, q_{n-1}, \tilde{p}$ and
$p, zq_0,\dots, zq_{n-1}$
are both orthonormal bases for $\text{span}\{z^j:j=0,\dots,n\}$.
So, there exists a unitary matrix $U$ such that
\[
U \begin{pmatrix} \tilde{p}(z) \\ q_0(z) \\ \vdots \\ q_{n-1} \end{pmatrix} = 
\begin{pmatrix} p(z) \\ z q_0(z) \\ \vdots \\ z q_{n-1}(z) \end{pmatrix}.
\]
Unitarity implies that
\[
\tilde{p}(z) \overline{\tilde{p}(w)} + \sum_{j=0}^{n-1}q_j(z) \overline{q_j(w)}
=
p(z) \overline{p(w)} + z\bar{w} \sum_{j=0}^{n-1}q_j(z) \overline{q_j(w)}
\]
and this rearranges into
\[
\frac{p(z) \overline{p(w)} - \tilde{p}(z) \overline{\tilde{p}(w)} }{1-z\bar{w}} = \sum_{j=0}^{n-1}q_j(z) \overline{q_j(w)}.
\]
Finally, dividing by $p(z) \overline{p(w)}$ yields
\[
\frac{ 1- \phi(z) \overline{\phi(w)}}{1-z\bar{w}} = \sum_{j=0}^{n-1}\frac{q_j(z)}{p(z)} 
\frac{\overline{q_j(w)}}{\overline{p(w)}}.
\]
In particular, we immediately get \eqref{pickmatrix} for $t_j = \phi(z_j)$.
(Note we can get the most general finite Blaschke product 
by allowing $\tilde{p}$ to have a monomial factor in front.)
This method at first seems ad hoc but it has a variety of other purposes
as the measures $\frac{1}{|p|^2}d\sigma$, called \hl{Bernstein-Szeg\H{o} measures},
can be used to solve a different type of interpolation problem, namely
the truncated \hl{trigonometric moment problem}; see \cite{Landau}.

Everything above can be generalized to two variables if one is careful
about how one generalizes.

\begin{theorem}[Agler's Pick interpolation theorem] \label{aglerpick}
Given $(z_1,w_1),\dots, (z_N,w_N) \in \DD^2$ and $t_1,\dots, t_N \in \DD$,
there exists an analytic function $f:\DD^2 \to \DD$ with $f(z_j,w_j) = t_j$ 
for $j=1,\dots, N$ if and only if 
there exist positive semi-definite matrices $A, B$ such that
\begin{equation} \label{aglerdecomp}
1- t_j\bar{t}_k = (1-z_j\bar{z}_k) A_{j,k} + (1-w_j\bar{w}_k) B_{j,k}.
\end{equation}
\end{theorem}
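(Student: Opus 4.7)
The plan is to adapt the one-variable Pick argument in two steps: a sufficiency direction built from a lurking-isometry / transfer-function construction, and a necessity direction that reduces, via Proposition \ref{carathm}, to showing every rational inner function on $\DD^2$ admits an Agler decomposition.

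For sufficiency, assume the PSD matrices $A, B$ exist. Factor $A_{jk} = \langle u_j, u_k\rangle$ and $B_{jk} = \langle v_j, v_k\rangle$ in auxiliary Hilbert spaces $\C^M$ and $\C^N$. The Agler identity \eqref{aglerdecomp} rearranges into
\[
1 + z_j\bar z_k \langle u_j, u_k\rangle + w_j\bar w_k \langle v_j, v_k\rangle = t_j \bar t_k + \langle u_j, u_k\rangle + \langle v_j, v_k\rangle,
\]
which is exactly the statement that the map
\[
\begin{pmatrix} 1 \\ z_j u_j \\ w_j v_j \end{pmatrix} \longmapsto \begin{pmatrix} t_j \\ u_j \\ v_j \end{pmatrix}
\]
extends to a linear isometry between the spans and, after enlarging the target, to a unitary $U = \begin{pmatrix} a_0 & B_0 \\ C_0 & D_0 \end{pmatrix}$ on $\C \oplus \C^{M+N}$. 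Mirroring the one-variable template, I define the transfer function
\[
f(z,w) = a_0 + B_0\, E(z,w)\bigl(I - D_0\, E(z,w)\bigr)^{-1} C_0,
\]
where $E(z,w) = \mathrm{diag}(zI_M, wI_N)$ splits the coordinates according to the $u$- and $v$-slots. Analyticity on $\DD^2$ is clear since $\|E(z,w)\| < 1$ there; the identity $U^*U = I$ unwinds to $1 - |f(z,w)|^2 = (1-|z|^2)\|\cdot\|^2 + (1-|w|^2)\|\cdot\|^2 \geq 0$; and reading the lurking-isometry setup in reverse at the nodes yields $f(z_j,w_j) = t_j$.

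For necessity, given any Schur-class interpolant $f$, Proposition \ref{carathm} approximates $f$ locally uniformly near the nodes by rational inner functions $\phi_n$ with $\phi_n(z_j,w_j) \to t_j$. A normal-families / compactness argument (bounding the candidate PSD matrices and extracting a subsequential limit) reduces the problem to showing that every rational inner $\phi = \tilde p/p$ on $\DD^2$ admits a \emph{global} Agler decomposition
\[
1 - \phi(z)\overline{\phi(\zeta)} = (1 - z_1\bar\zeta_1) K_1(z,\zeta) + (1-z_2\bar\zeta_2) K_2(z,\zeta),
\]
with PSD kernels $K_1, K_2$; evaluation at the nodes then produces the required $A, B$. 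To construct $K_1, K_2$ I would imitate the Bernstein-Szeg\H{o} argument from the excerpt: working in $L^2(d\sigma/|p|^2)$, identify two finite-dimensional subspaces encoding the $z$- and $w$-shifts and show that their reproducing kernels, together with $p$ and $\tilde p$, assemble into a Christoffel-Darboux-type identity that, after dividing by $|p|^2$, is precisely the desired decomposition.

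The main obstacle is this last step. In one variable the monomials $1, z, \dots, z^n$ are canonically ordered and the Christoffel-Darboux identity drops out mechanically; in two variables, $\{z^a w^b\}$ is only partially ordered, and one must select invariant subspaces compatible with both coordinate shifts so that the two kernels sum correctly. This is the content of the Cole-Wermer / Agler / Kummert polynomial decomposition, and it is the technically demanding ingredient. A secondary nuisance is the case where $p$ has zeros on $\T^2$, so $d\sigma/|p|^2$ is not integrable; I would handle this either by dilating $\phi$ to $\phi_r(z,w) = \phi(rz, rw)$ and taking $r \to 1^-$, or by perturbing $p$ off the torus before running the orthogonal-polynomial construction and then passing to a limit.
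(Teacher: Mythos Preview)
Your proposal is correct and follows essentially the same route the paper sketches: sufficiency via the lurking-isometry/transfer-function construction, and necessity by reducing (via Proposition~\ref{carathm}) to rational inner functions and then invoking the two-variable sum-of-squares/Christoffel--Darboux formula of Cole--Wermer, Geronimo--Woerdeman, and Kummert. You correctly flag that last polynomial decomposition as the hard step and propose the same dilation fix for boundary zeros; the paper likewise defers this ingredient to the cited literature rather than proving it in place.
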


Just as in one variable, we can prove sufficiency by solving
for a rational inner function that solves the interpolation problem
and it is enough to prove necessity for rational inner functions.
Addressing sufficiency, 
one can show that if a formula like \eqref{aglerdecomp} holds
then the interpolation problem can be solved with a function
of the form
\begin{equation} \label{tfr}
 \phi(z,w) =A + B \Delta(z,w) (I-D \Delta(z,w))^{-1}C
\end{equation}
where we again have a block $(1+N)\times (1+N)$ unitary $U =  \begin{pmatrix} A & B \\ C & D \end{pmatrix}$ 
but 
\begin{equation} \label{delta}
\Delta(z,w) = \begin{pmatrix} z I_{n} & 0 \\ 0 & w I_{m} \end{pmatrix}
\end{equation}
where $n+m = N$.  This function is a rational inner function in two variables.

Conversely, the original proof of necessity in this theorem 
used a cone separation argument
and \hl{And\^{o}'s inequality} from operator theory; see \cite{PickBook, AM99, Agler1}.
It was perhaps the insight of Cole-Wermer \cite{CW99} that
it is enough to prove a \hl{sum of squares} formula of the form
\begin{equation} \label{sos}
|p(z,w)|^2 - |\tilde{p}(z,w)|^2 = (1-|z|^2) \sum_{j=1}^{n} |E_j(z,w)|^2 + (1-|w|^2) \sum_{j=1}^{m} |F_j(z,w)|^2
\end{equation}
for $p \in \C[z,w]$ with no zeros in $\DD^2$, and $E_j,F_j \in \C[z,w]$.
The moment problem approach mentioned above can be used to
establish this formula independent of operator theoretic tools---this was
done in Geronimo-Woerdeman \cite{GW} and we discussed this connection
further in \cite{gKBS}.
It turns out that formulas like \eqref{tfr} are interchangeable with
sums of squares formulas as in \eqref{sos}.  
This equivalence is explained in detail in \cite{gKKummert}.
Kummert \cite{Kummert} proved that all rational inner functions in two variables
satisfy a formula like \eqref{tfr} using an approach motivated by
control engineering, and thus gave an approach to
much of this material that predates much of the mathematics literature.  
We gave a mathematical exposition of Kummert's work in \cite{gKKummert}.

It turns out to be quite interesting to develop and refine the formula
\eqref{sos} further.  It will allow us to prove things about polynomials
with no zeros on $\DD^2$ and lead to additional new questions that we discuss 
in the next section.

For now, let us remark that currently there is no known interpolation 
theorem for $\DD^d$ when $d>2$.  
An `interpolation theorem' would mean that we reduce 
interpolation for bounded analytic functions to
searching over a finite dimensional set.  Theorem \ref{aglerpick}
does this even if the conditions given could be difficult to check.
The following is not even known:

\begin{problem} \label{ratinterp}
For $d>2$, given $z_1,\dots, z_N\in \DD^d$ and $f:\DD^d \to \DD$ analytic, does there
exist a rational inner function $\phi$ in $d$ variables with
$f(z_j) = \phi(z_j)$  for $j=1,\dots, N$?
\end{problem}

The answer to this question is affirmative if $N\leq 3$ by work in \cite{Kosinski}, \cite{gK3x3}.

\section{Sums of squares} 

Using either a moment problem approach (\cite{GW}, \cite{gKBS})
or Kummert's approach (\cite{Kummert}, \cite{gKKummert}),
it is possible to prove the following refined version of \eqref{sos}.

\begin{theorem} \label{sosthm}
Let $\phi(z,w) = \frac{\tilde{p}(z,w)}{p(z,w)}$ be a rational inner function,
where $p \in \C[z,w]$ has no zeros in $\DD^2$, no factors in common
with $\tilde{p}(z,w)$, and $\tilde{p}$ has bidegree $(n,m)$. 
Then,
there exist polynomials $E_1,\dots, E_n$ of bidegree at most $(n-1,m)$ and $F_1,\dots, F_m$ 
of bidegree at most $(n,m-1)$ 
such that
\[
|p(z,w)|^2 - |\tilde{p}(z,w)|^2 = (1-|z|^2) \sum_{j=1}^{n} |E_j(z,w)|^2 + (1-|w|^2) \sum_{j=1}^{m} |F_j(z,w)|^2.
\]
Equivalently, there exists a $(1+n+m)\times (1+n+m)$ unitary $U$ such that \eqref{tfr} holds.
\end{theorem}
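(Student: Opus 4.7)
The plan is to establish the sum of squares formula, from which the $(1+n+m)$-dimensional unitary $U$ follows by the standard lurking isometry correspondence discussed in \cite{gKKummert}. The starting point is the Bernstein-Szeg\H{o} measure $d\mu = d\sigma/|p|^2$ on $\T^2$. First reduce to the case when $p$ has no zeros on $\cd^2$ via a limiting argument $p_r(z,w) = p(rz, rw)$ as $r \to 1^-$; in this case $1/|p|^2 \in L^\infty(\T^2)$, so that $V = \mcp_{n, m}$ embeds into $L^2(\mu)$ and has a well-defined reproducing kernel structure.

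The essential orthogonality facts in $L^2(\mu)$, both consequences of $1/p$ being analytic in $\DD^2$ (so its Fourier coefficients are supported in $\ints_{\geq 0}^2$), are: \emph{(i)} $p$ is $\mu$-orthogonal to $z^j w^k$ for every $(j,k) \neq (0,0)$ in $\{0, \ldots, n\} \times \{0, \ldots, m\}$; \emph{(ii)} $\tilde{p}$ is $\mu$-orthogonal to $z^j w^k$ for every such $(j, k) \neq (n, m)$; with $\|p\|_\mu = \|\tilde{p}\|_\mu = 1$. Consequently $V$ admits two orthogonal decompositions
\[
V = \C p \oplus V_0 = V_1 \oplus \C \tilde{p},
\]
where $V_0 = z\mcp_{n-1, m} + w\mcp_{n, m-1}$ and $V_1 = \mcp_{n-1, m} + \mcp_{n, m-1}$. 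After polarization the desired identity is equivalent to
\[
p(z, w)\overline{p(z', w')} - \tilde{p}(z, w)\overline{\tilde{p}(z', w')} = K_{V_1} - K_{V_0},
\]
where $K_{V_0}, K_{V_1}$ denote reproducing kernels in $L^2(\mu)$ evaluated at the four arguments.

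The remaining step is to factor the right-hand side as $(1 - z\bar{z}')K_A + (1 - w\bar{w}')K_B$ with $A \subseteq \mcp_{n-1, m}$ of dimension $n$ and $B \subseteq \mcp_{n, m-1}$ of dimension $m$. Setting $(z', w') = (z, w)$ and expanding these kernels in orthonormal bases then yields the sum of squares, with $E_1, \ldots, E_n$ an o.n.b. of $A$ and $F_1, \ldots, F_m$ an o.n.b. of $B$; the bidegree bounds are automatic from the ambient polynomial spaces. The factorization exploits that multiplication by $z$ and $w$ are isometries on $L^2(\mu)$, so that shift operators relate corresponding pieces of $V_0$ and $V_1$ through a Christoffel-Darboux-type identity.

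The main technical obstacle is carrying out this factorization in the required form, since $V_0$ and $V_1$ each have nontrivial internal overlap ($z\mcp_{n-1, m} \cap w\mcp_{n, m-1} = zw\mcp_{n-1, m-1}$, and similarly for $V_1$), so naive inclusion-exclusion for reproducing kernels fails unless the ``difference'' subspaces are mutually orthogonal in $L^2(\mu)$---which they generally are not. The resolution, worked out in the Geronimo-Woerdeman moment problem framework, is to use orthonormal bases compatible with the shift-invariance of $\mu$ under multiplication by $z$ and $w$, and this is precisely where the dimensions $n$ and $m$ (matching the bidegree bounds $(n-1, m)$ and $(n, m-1)$) emerge, rather than the generic $n+1$ or $m+1$.
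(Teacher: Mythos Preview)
The paper is a survey and does not prove this theorem; it states only that it follows ``using either a moment problem approach (\cite{GW}, \cite{gKBS}) or Kummert's approach (\cite{Kummert}, \cite{gKKummert}).'' Your outline follows the first of these, via the Bernstein--Szeg\H{o} measure $d\mu = |p|^{-2}\,d\sigma$ and reproducing kernels of polynomial subspaces of $L^2(\mu)$, so you are aligned with one of the two routes the paper points to.

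The setup through the identity $p(z,w)\overline{p(z',w')} - \tilde{p}(z,w)\overline{\tilde{p}(z',w')} = K_{V_1} - K_{V_0}$ is correct and well organized. But the substance of the argument lives in your final paragraph, and there you name the obstacle (the overlaps inside $V_0$ and $V_1$ and the failure of naive inclusion--exclusion) without resolving it: ``the resolution, worked out in the Geronimo--Woerdeman moment problem framework, is to use orthonormal bases compatible with the shift-invariance'' is a citation and a slogan, not an argument. The construction of specific $n$- and $m$-dimensional subspaces $A\subset\mcp_{n-1,m}$, $B\subset\mcp_{n,m-1}$ and the verification that $K_{V_1}-K_{V_0} = (1-z\bar z')K_A + (1-w\bar w')K_B$ \emph{is} the theorem, so as written the proposal has a genuine gap at the decisive step. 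To close it you must actually name the subspaces---for instance, one extremal choice in \cite{gKnoz} takes $A = \mcp_{n-1,m}\ominus_\mu w\,\mcp_{n-1,m-1}$ and $B = \mcp_{n,m-1}\ominus_\mu z\,\mcp_{n-1,m-1}$, which visibly have dimensions $n$ and $m$---and then carry out the Christoffel--Darboux bookkeeping that matches the two orthogonal decompositions of $\mcp_{n,m}$. (A smaller point: the limiting argument $p_r\to p$ also needs a word, since you must explain why the uniform degree bounds on the $E_j,F_j$ allow passage to a subsequential limit that preserves the exact counts $n$ and $m$ of squares.)
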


The key thing is that the numbers of squares are precisely related to 
the bidegree of $\tilde{p}$.  
This precision is important in one of the main theorems
of \cite{AMY} on generalizing \hl{Loewner's theorem} on matrix monotone
functions to several variables.
It is also crucial to the following corollary, which gives a 
straightforward way to construct all polynomials
with no zeros in $\DD^2$. 

\begin{corollary}\label{detrep}
Suppose $p \in \C[z,w]$ has no zeros in $\DD^2$ and bidegree $(n,m)$.
Then, there exists an $(n+m)\times (n+m)$ contractive matrix $D$
such that
\[
p(z,w) = p(0,0) \det(I - D \Delta(z,w))
\]
with $\Delta$ as in \eqref{delta}.
\end{corollary}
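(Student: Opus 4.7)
The plan is to read the matrix $D$ off of Theorem \ref{sosthm} applied to the rational inner function $\phi = \tilde{p}/p$, and then identify $p$ with $\det(I - D\Delta(z,w))$ up to a multiplicative constant. By Theorem \ref{sosthm} there is a $(1+n+m)\times(1+n+m)$ unitary
$U = \begin{pmatrix} A & B \\ C & D \end{pmatrix}$,
with $A \in \C$ and $D \in \C^{(n+m)\times(n+m)}$, such that
\[
\frac{\tilde{p}(z,w)}{p(z,w)} = A + B\Delta(z,w)\bigl(I - D\Delta(z,w)\bigr)^{-1}C.
\]
Being a submatrix of a unitary, $D$ is automatically contractive, and since $\|D\Delta(z,w)\| \leq \max(|z|,|w|) < 1$ on $\DD^2$, the matrix $I - D\Delta(z,w)$ is invertible throughout $\DD^2$, so $\det(I - D\Delta(z,w))$ has no zeros in the bidisk.

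Next I would track the bidegree of $\det(I - D\Delta(z,w))$. Because $\Delta$ is diagonal with its first $n$ diagonal entries equal to $z$ and its last $m$ equal to $w$, column $j$ of $D\Delta$ carries a factor of $z$ (if $j\leq n$) or $w$ (if $j>n$). Expanding via the Leibniz formula shows that every monomial appearing in $\det(I - D\Delta(z,w))$ is of the form $z^k w^\ell$ with $k\leq n$ and $\ell\leq m$. So $\det(I - D\Delta(z,w))$ has bidegree at most $(n,m)$, and its value at the origin is $1$.

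To finish, I would clear denominators. Using the adjugate formula $(I - D\Delta)^{-1} = \operatorname{adj}(I - D\Delta)/\det(I - D\Delta)$, the transfer function representation above rearranges into a polynomial identity
\[
\tilde{p}(z,w)\,\det(I - D\Delta) = p(z,w)\bigl[A\det(I - D\Delta) + B\Delta\,\operatorname{adj}(I - D\Delta)\,C\bigr],
\]
so $p$ divides $\tilde{p}\cdot\det(I - D\Delta)$ in $\C[z,w]$. Since $p$ and $\tilde{p}$ are coprime by hypothesis, $p$ must divide $\det(I - D\Delta)$. Combined with the bidegree bound $(n,m)$ from the previous paragraph and the fact that $p$ already has bidegree exactly $(n,m)$, this forces $\det(I - D\Delta) = c\,p$ for a constant $c$. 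Evaluating at the origin yields $1 = c\,p(0,0)$, so $p(z,w) = p(0,0)\det(I - D\Delta(z,w))$.

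The main technical point is the divisibility step: getting the polynomial identity above on the nose, so that coprimality of $p$ and $\tilde{p}$ can be invoked. Once that is in hand, both the non-vanishing on $\DD^2$ and the degree comparison are essentially automatic.
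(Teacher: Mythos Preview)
Your argument is correct when $p$ and $\tilde{p}$ have no common factor, and in that case it is essentially the paper's intended derivation of the corollary from Theorem~\ref{sosthm}.  However, you write ``since $p$ and $\tilde{p}$ are coprime by hypothesis,'' and this is not part of the hypothesis of Corollary~\ref{detrep}: the corollary is stated for an arbitrary $p\in\C[z,w]$ with no zeros in $\DD^2$.  Coprimality is used twice in your proof---once to invoke Theorem~\ref{sosthm} (which explicitly assumes $p$ and $\tilde{p}$ share no factor), and once in the divisibility step---so the proof as written does not cover the general statement.  Concretely, if $p$ and $\tilde{p}$ share a nontrivial factor, then $\tilde{p}/p$ reduces to $\tilde{q}/q$ with $\deg q$ strictly below $(n,m)$; the transfer-function realization coming from Theorem~\ref{sosthm} then produces a $D$ of size smaller than $(n+m)\times(n+m)$, and $\det(I-D\Delta)$ cannot have bidegree $(n,m)$.

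The paper indicates (in the paragraph immediately after the corollary) how to close this gap: factor $p=p_1p_2$ where $p_1$ has no common factor with $\tilde{p}_1$ and $p_2$ is a constant multiple of $\tilde{p}_2$.  Your argument handles $p_1$; for $p_2$ one obtains a determinantal representation with $D$ unitary.  The two representations are then combined by forming the block-diagonal matrix $D=D_1\oplus D_2$ and permuting rows and columns so that $\Delta$ has the required form; the permutation preserves both contractivity of $D$ and the determinant.  Adding a sentence reducing to the coprime case along these lines would complete your proof.
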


If $p$ has no factors in common with $\tilde{p}$, 
the matrix $D$ can be given with additional structure---
it can be chosen to be a rank one perturbation of a unitary;
see Section 9 of \cite{gKextreme}.
In fact, polynomials with no zeros in $\DD^2$ can be
factored into $p = p_1p_2$ where
$p_1$ has no factors in common with $\tilde{p}_1$
while $p_2$ is a constant multiple of $\tilde{p}_2$.
In the latter case we can find a determinantal representation
for $p_2$ where $D$ is in fact a unitary.
Note that a direct proof of Corollary \ref{detrep} is 
given in \cite{KummertStable}, \cite{DrexelStable}.

Returning to Theorem \ref{sosthm}, it is worth
pointing out that such sums of squares formulas are in 
general not unique.  
A trivial way they are not unique is that we could replace
\[
\begin{pmatrix} E_1 \\ \vdots \\ E_n\end{pmatrix} \text{ and } \begin{pmatrix} F_1 \\ \vdots \\ F_m\end{pmatrix}
\]
with unitary multiples.  The sums of squares terms can
be non-unique in a non-trivial way as well.

\begin{example}
For $p_1 = 3-z-w$ there exist distinct real numbers $A,B$ 
(one can solve for them) so that
\[
\begin{aligned}
|p_1(z,w)|^2 - |\tilde{p}_1(z,w)|^2
&= (1-|z|^2)|A+Bw|^2 + (1-|w|^2)|B+Az|^2 \\
&= (1-|z|^2)|B+Aw|^2 + (1-|w|^2)|A+Bz|^2.
\end{aligned}
\]
Thus, this example has two inequivalent sums of squares formulas. $\diamond$
\end{example}

Understanding unique sums of squares decompositions or describing all such decompositions
turns out to be surprisingly deep.  It forces us to confront polynomials
with boundary zeros.
The following was proven in \cite{gKnoz} (Theorem 1.15), \cite{gKintreg} (Corollary 13.6).

\begin{theorem}
Let $p \in \C[z,w]$ have no zeros in $\DD^2$, no factors in common with
$\tilde{p}$, and $\deg \tilde{p} = (n,m)$.
Consider the sums of squares terms $\mathcal{E}(z,w) = \sum_{j=1}^{n}|E_j(z,w)|^2$, 
$\mathcal{F}(z,w) = \sum_{j=1}^{m}|F_j(z,w)|^2$ appearing in Theorem \ref{sosthm}.
Then, the following are equivalent
\begin{enumerate}
\item  the sums of squares terms $\mathcal{E},\mathcal{F}$ are unique
(up to unitary transformation of the sums of squares)
\item  the sums of squares terms possess the symmetries
\[
\mathcal{E}(z,w) = |z^{n-1}w^{m}|^2\mathcal{E}(1/\bar{z},1/\bar{w}) \quad 
\mathcal{F}(z,w) = |z^{n}w^{m-1}|^2\mathcal{F}(1/\bar{z},1/\bar{w})
\]

\item If $f \in \C[z,w]$ has $\deg f \leq (n-1,m-1)$ and $\int_{\T^2} \left|\frac{f}{p}\right|^2 d\sigma <\infty$
then $f =0$.

\item $p$ and $\tilde{p}$ have $2nm$ common zeros on $\T^2$ counted using
intersection multiplicities.
\end{enumerate}

\end{theorem}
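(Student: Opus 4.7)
The plan is to establish three pairwise equivalences: (1) $\Leftrightarrow$ (2) via a reflection involution on the affine set of decompositions, (1) $\Leftrightarrow$ (3) via a Bernstein-Szeg\H{o} $L^2$ framework with weight $1/|p|^2$, and (3) $\Leftrightarrow$ (4) via a local B\'ezout analysis at torus singularities. The last is where the real work lies.

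For (1) $\Leftrightarrow$ (2), I would substitute $(z,w)\mapsto (1/\bar z,1/\bar w)$ into the sum-of-squares identity. Using $|p(1/\bar z,1/\bar w)|^2 = |z^n w^m|^{-2}|\tilde p(z,w)|^2$ and the fact that reflecting each $E_j$ at bidegree $(n-1,m)$ converts $|E_j(1/\bar z,1/\bar w)|^2$ into $|z^{n-1}w^m|^{-2}|\tilde E_j(z,w)|^2$, a direct rearrangement shows that
\[
\tilde{\mathcal E}(z,w) = |z^{n-1} w^m|^2 \mathcal E(1/\bar z,1/\bar w), \qquad \tilde{\mathcal F}(z,w) = |z^n w^{m-1}|^2 \mathcal F(1/\bar z,1/\bar w)
\]
is again a valid decomposition of the correct shape. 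Uniqueness thus forces $(\mathcal E,\mathcal F)=(\tilde{\mathcal E},\tilde{\mathcal F})$, which is (2). Conversely, the affine set of all admissible $(\mathcal E,\mathcal F)$ is invariant under this reflection, and a convexity argument on the underlying Gram matrices shows that any nontrivial affine direction would yield two distinct symmetric decompositions by symmetrization, so the existence of even one symmetric decomposition forces uniqueness.

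For (1) $\Leftrightarrow$ (3), I would adapt the Bernstein-Szeg\H{o} orthogonal-polynomial setup sketched earlier in the paper. Any two valid decompositions differ by a Hermitian pair $(\Delta\mathcal E,\Delta\mathcal F)$ satisfying $(1-|z|^2)\Delta\mathcal E + (1-|w|^2)\Delta\mathcal F = 0$. Expanding on the Fourier side and using divisibility of $|p|^2-|\tilde p|^2$ by the boundary factors, one identifies these null relations with polynomials $f$ of bidegree at most $(n-1,m-1)$ for which $f/p \in L^2(\T^2, d\sigma)$; the integrability condition is precisely what allows $f$ to parametrize a Gram-matrix perturbation without exceeding the allotted bidegrees. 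Uniqueness of the sum-of-squares decomposition is therefore equivalent to (3).

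The main obstacle is (3) $\Leftrightarrow$ (4). Since $|p|=|\tilde p|$ on $\T^2$, the common zeros of $p$ and $\tilde p$ on $\T^2$ coincide with the torus zero set of $p$. At each such $\zeta$, I would apply a Puiseux-style branch analysis to $\{p=0\}$ through $\zeta$ and count the linear conditions imposed on $f$ by local $L^2$-integrability of $f/p$; the core point is that these local codimensions are controlled by the local intersection multiplicity of $p$ and $\tilde p$ at $\zeta$. Summing over all boundary zeros and comparing with the B\'ezout bound $2nm$ for curves of bidegree $(n,m)$ in $\mathbb{P}^1\times \mathbb{P}^1$, triviality of the $L^2$-space of admissible $f$ inside the $nm$-dimensional space of bidegree $\leq (n-1,m-1)$ polynomials becomes equivalent to the total torus intersection multiplicity attaining this B\'ezout maximum, which is precisely (4). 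A delicate ancillary step is ruling out shared zeros of $p$ and $\tilde p$ off $\T^2$, which follows from stability of $p$, the no-common-factor hypothesis, and the $(1/\bar z,1/\bar w)$-symmetry of the common zero locus.
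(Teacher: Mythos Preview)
The paper does not actually prove this theorem; it is stated as a result from \cite{gKnoz} and \cite{gKintreg}, followed only by an informal discussion of why the four conditions hang together. Your overall architecture (reflection involution for (1)$\Leftrightarrow$(2), an $L^2(|p|^{-2}d\sigma)$ framework for (3), B\'ezout for (4)) matches that discussion, and your (1)$\Rightarrow$(2) via reflecting a decomposition is exactly right. But your (2)$\Rightarrow$(1) has a real gap.

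The convexity-plus-symmetrization step does not close. If $(\mathcal E_0,\mathcal F_0)$ is a symmetric decomposition and $(\mathcal E_1,\mathcal F_1)$ is any other, symmetrizing $(\mathcal E_1,\mathcal F_1)$ does give a symmetric decomposition, but nothing forces it to differ from $(\mathcal E_0,\mathcal F_0)$; an affine involution on a convex set can perfectly well have a unique fixed point while the set itself is large (think of the midpoint of a segment under the flip). So ``existence of one symmetric decomposition'' alone does not imply uniqueness. What the paper points to, and what the cited proofs actually use, is an \emph{extremal} structure you have not invoked: among all decompositions there is one with $\mathcal E$ maximal (and $\mathcal F$ minimal) and one with $\mathcal F$ maximal, these two are exchanged by the reflection involution, and every decomposition is sandwiched between them. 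Condition (2), read for an extremal decomposition, forces the two extremals to coincide, and then the sandwiching gives uniqueness. Without this ordering your argument cannot conclude.

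Your (1)$\Leftrightarrow$(3) and (3)$\Leftrightarrow$(4) are in the right spirit but differ in emphasis from the route the paper describes. In the cited work the sums of squares are \emph{constructed} from orthonormal bases of explicit subspaces of $L^2(|p|^{-2}d\sigma)$, so that the ambiguity in the decomposition is literally the subspace appearing in (3); your ``null directions parametrized by $f$'' is a reformulation of this but would need that construction to be made precise. For (3)$\Leftrightarrow$(4) the paper stresses a global count rather than a purely local one: B\'ezout on $\mathbb P^1\times\mathbb P^1$ gives $2nm$ common zeros of $p$ and $\tilde p$ in total, and the equivalence amounts to showing that the dimension of the space in (3) measures exactly (half of) the deficit between $2nm$ and the number of those intersections lying on $\T^2$.
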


Item 2 above is related to the fact that every $p$
has two special sums of squares decompositions.
Loosely speaking, one such decomposition 
has the property that $\mathcal{E}$ is maximal among decompositions
and the other has $\mathcal{F}$ maximal.  Item 2 then
gives a condition that makes these two special decompositions coincide---and hence
all decompositions coincide. 
See Theorem 1.3 of \cite{gKnoz}.

The third and fourth items express the idea that $p$ has
the maximal number of zeros on $\T^2$ without having
a curve of zeros on $\T^2$.  We called such polynomials 
\emph{saturated} in \cite{gKextreme}.  The presence
of the $L^2$ condition in item 3 is related to the
construction of the sums of squares terms
in \cite{gKnoz} using bases for certain subspaces of 
$L^2(\frac{1}{|p|^2} d\sigma)$.  
Regarding item 4, \hl{B\'ezout's theorem} for $\C_{\infty}\times \C_{\infty}$
would say that $p$ and $\tilde{p}$ have $2nm$ common 
zeros globally, and item 4 is the condition that all of these zeros
occur on $\T^2$.  Again, common zeros are counted using
intersection multiplicity as in elementary algebraic curve theory as in \cite{Fulton}.

\begin{example}
The polynomial $p_0 = 2-z-w$ has unique sums of squares decomposition
\[
|p_0(z,w)|^2 - |\tilde{p}_0(z,w)|^2 = (1-|z|^2)2|1-w|^2 + (1-|w|^2)2|1-z|^2.
\]
This can be checked using any of the above conditions 2-4 but symmetry
is probably easiest to use.  $\diamond$
\end{example}

It is probably not feasible to characterize
all sums of squares decompositions, but
it turns out it is possible to characterize
the decompositions where the number of
squares, $n$ for $\mathcal{E}$ and $m$ 
for $\mathcal{F}$, are minimal as in Theorem \ref{sosthm}.
The characterization is too technical to state here (see Theorem 9.3 of \cite{gKintreg})
but the most interesting aspect is that the minimal decompositions
are in one-to-one correspondence with the invariant subspaces
of a special pair of commuting contractive matrices.

Assume $p$ is given as in Theorem \ref{sosthm}.
Let 
\[
\mathcal{G} = \left\{q \in \C[z,w]\cap L^2(\frac{1}{|p|^2}d\sigma): \deg q \leq (n-1,m-1)\right\}.
\]
Let $P$ denote orthogonal projection onto $\mathcal{G}$
in $L^2(\frac{1}{|p|^2}d\sigma)$.  
Consider the operators $T_z, T_w: \mathcal{G} \to \mathcal{G}$ given by
\[
T_z q = P(z q) \qquad T_w q = P(w q).
\]
It is a non-trivial fact that $T_z$ and $T_w^*$ commute
and the minimal decompositions of $p$ 
are in correspondence with the joint invariant
subspaces of $(T_z, T_w^*)$.
This result has important antecedents in work of 
Ball-Sadosky-Vinnikov \cite{BSV} and Geronimo-Woerdeman \cite{GW}.

Understanding sums of squares formulas for
rational inner functions naturally leads to several questions.  
First, can we better understand the space
\[
\C[z,w] \cap L^2\left(\frac{1}{|p|^2} d\sigma\right) \text{ ?}
\]
Second, what happens in higher dimensions?
The first question forces us to confront a local
theory of stable polynomials (those with no zeros
on $\DD^d$) near a boundary zero.
Along the way we learn much about the geometry
of level sets of rational inner functions on $\T^2$.
We explore this in the next section.
The second question is inextricably linked
to operator theory (the failure of \hl{von Neumann's
inequality} in 3 or more variables). 
We explore this in the final section.

\section{Boundary behavior}

In this section we are interested in the behavior
of rational functions $q/p$ near a boundary zero.
A variety of behaviors could be studied including 
existence of non-tangential boundary values (or 
derivatives) as well as boundedness or integrability
near/on the boundary.  

Non-tangential behavior of general bounded analytic
functions on polydisks has been studied a great 
deal in recent years.  See \cite{Abate}, \cite{AMYcara}, \cite{AMYbook}, \cite{MPcara}, 
\cite{ATYboundary}, \cite{Pascoe-cara}, \cite{Tully-Doyle}.  
However, there are some notable simplifications that occur 
for rational and rational inner functions.
A main theorem of \cite{gKintreg}, \cite{BKPS} is the following.

\begin{theorem} \label{nontan}
Suppose $p,q\in \C[z_1,\dots, z_d]$, $p$ has no zeros on $\DD^d$,
and $f=q/p$ is bounded on $\DD^d$.  Then, $f$ has a non-tangential
limit at every point of $\T^d$.  
\end{theorem}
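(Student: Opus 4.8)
The plan is to reduce everything to a statement about a single rational inner function and then use the structure theory. First I would observe that since $f = q/p$ is bounded on $\DD^d$, say $\|f\|_\infty \le M$, the function $f/M$ lies in the Schur class and hence extends (by a standard compactness/normal families argument, or by directly invoking the relation to stable polynomials) to a rational function whose denominator $p$ has no zeros in $\DD^d$; we may clear common factors and assume $(p,q)$ coprime. The key reduction is that non-tangential limits of $f$ at a point $\tau \in \T^d$ are governed entirely by the behavior of $p$ near $\tau$: away from the zero set of $p$ on $\cd^d$ the function $f$ is continuous, so the only points in question are the (finitely many, since $p$ is not identically zero and its zero set on $\T^d$ has measure zero by the first Lemma, though one wants more — that the bad set is contained in the common zeros of $p$ with the relevant pieces) $\tau$ with $p(\tau)=0$.

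At such a $\tau$, the main step is to relate $q/p$ to a rational inner function. The crucial algebraic fact is that boundedness of $q/p$ forces $p$ to divide an expression of the form $z^k \tilde q$ plus lower-order data — more precisely, one shows that near $\tau$ one can write $q = \tilde p \cdot g + (\text{stuff vanishing to high order})$, or better: one appeals to the fact (provable along the lines of Theorem \ref{RIFchar}, or cited from \cite{gKintreg}) that a bounded rational function $q/p$ with $p$ stable can be written so that $|q| \le |p|$ on $\cd^d$ follows, and then $q/p$ is dominated in a precise sense by the rational inner function $\tilde p / p$. So I would first prove the result for rational inner functions $\tilde p/p$ themselves, and then bootstrap: if $\phi = \tilde p/p$ has a non-tangential limit at $\tau$ and $q/p$ is bounded, then $q/p = (q/\tilde p)\cdot \phi$ where $q/\tilde p$ is... not obviously better. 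The cleaner route is to invoke Theorem \ref{sosthm}-type structure in the form of the transfer function realization \eqref{tfr}: in two variables $\phi(z,w) = A + B\Delta(I-D\Delta)^{-1}C$ with $U$ unitary, and the non-tangential limit at $\tau$ exists because $(I-D\Delta(z,w))^{-1}$ has a non-tangential limit whenever one approaches a boundary point of $\T^2$ non-tangentially — this is where one uses that $\Delta$ is a diagonal matrix of scalar coordinates and that $D$ is a contraction, so $I - D\Delta(\tau)$ is invertible unless $\tau$ is a genuine singular point, and at a singular point one performs a finer analysis (Taylor expansion of $p$ at $\tau$, passing to the "desingularized" one-variable slices along a non-tangential approach curve).

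For general $d$, since no realization like \eqref{tfr} is available, I expect the proof in \cite{gKintreg}, \cite{BKPS} instead proceeds by the following mechanism, which I would reproduce: fix $\tau \in \T^d$ and a non-tangential approach region; parametrize approach by curves and reduce, via a clever change of variables, to understanding $\lim p(\tau + \text{perturbation})$ controlling $\lim q(\tau + \text{perturbation})$. The engine is a \emph{Łojasiewicz-type inequality} for the stable polynomial $p$: near a boundary zero $\tau$, one has $|p(z)| \gtrsim \operatorname{dist}((z), Z_p)^N$ on $\cd^d$, combined with the fact that $|q| \le |p|$ there (for the inner case) or $|q| \le M|p|$ (boundedness), which forces $q$ to vanish to matching order along the zero set and makes the ratio's limit computable by l'Hôpital-type expansion along the one real-dimensional approach path. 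The main obstacle, and the genuinely hard part, is precisely this: controlling $|p|$ from below in non-tangential approach regions at a boundary zero of a several-variable stable polynomial — equivalently, ruling out that $q/p$ oscillates as $z \to \tau$ non-tangentially. In two variables this is handled by the explicit structure of zero sets of stable polynomials on $\T^2$ (Puiseux expansions, the fact from \cite{gKintreg} that the zero variety near $\tau$ is a finite union of "nice" analytic curves each transverse in a suitable sense to $\T^2$), and the $d$-variable argument in \cite{BKPS} must localize to two-variable slices or use an induction on dimension after fixing all but two coordinates near their boundary values. I would structure the write-up as: (1) reduce to finitely many boundary zeros of $p$; (2) establish the lower bound $|p(z)| \gtrsim \operatorname{dist}(z, Z_p \cap \cd^d)^N$ in non-tangential cones via the local structure of $Z_p$; (3) deduce a matching upper bound on $|q|$ from boundedness and the reflection identity; (4) compute the limit along an arbitrary non-tangential path and check it is path-independent using the cone geometry. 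Step (2) is where essentially all the difficulty concentrates.
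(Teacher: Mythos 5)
Your proposal circles around the right territory but misses the actual engine of the paper's argument, which is the \emph{homogeneous expansion theorem} for stable polynomials. After conformally transferring the problem to the poly-upper-half-plane $\uhp^d$ with the boundary point sent to $0$, the paper expands $p(x) = \sum_{j=M}^N P_j(x)$ into homogeneous pieces and proves two structural facts: (i) the initial form $P_M$ is itself a stable homogeneous polynomial (no zeros in $\uhp^d$) and is, up to a unimodular constant, real-coefficiented; (ii) a rational function with homogeneous expansions $\bigl(\sum_{j\ge K} Q_j\bigr)/\bigl(\sum_{j\ge M} P_j\bigr)$ has a non-tangential limit at $0$ \emph{if and only if} $K = M$ and $Q_K$ is a scalar multiple of $P_M$, and the final step shows boundedness forces exactly this. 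None of the tools you name — the transfer-function realization \eqref{tfr}, Puiseux expansions, a {\L}ojasiewicz lower bound $|p| \gtrsim \operatorname{dist}(\cdot, Z_p)^N$ — is what does the work, and some of them could not: the realization \eqref{tfr} is a two-variable device, whereas the theorem is proved for all $d$, and Puiseux series (which underlie Theorems \ref{purestable} and \ref{bounded}) are likewise only available in two variables.

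The specific gap is your step (2). You correctly sense that a lower bound on $|p|$ in non-tangential cones is the crux, but you propose to get it from the distance to the zero variety, which does not by itself produce a \emph{limit}: even with such a bound, $q/p$ could oscillate boundedly. What the paper actually uses is that $P_M$, being stable and homogeneous, satisfies $|P_M(x)| \gtrsim |x|^M$ on non-tangential cones in $\uhp^d$, and then the higher-order terms are genuinely lower order; this, combined with the criterion in (ii) above, pins down the limit to $Q_M(x)/P_M(x) \equiv c$. Your step (3) gestures at the right conclusion (``$q$ vanishes to matching order'') but omits the stronger requirement that the initial form of $q$ be a \emph{constant multiple} of $P_M$, which is precisely what rules out oscillation. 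The bootstrap ``prove it for $\tilde p/p$ first, then reduce $q/p$ to it'' also does not appear and seems hard to carry out as you concede (``$q/\tilde p$ is... not obviously better''). In short, you have identified the geometry of the problem (localize at boundary zeros, non-tangential cones, match vanishing orders) but not the algebraic structure theorem that makes the limit actually exist, and your own write-up flags the unfilled hole.
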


A \hl{non-tangential limit} here just means that we approach a boundary point
in such a way that the distance to the point is comparable to the distance
to the boundary.  A first step in proving this and related local problems is to 
switch to the setting of the poly-upper half plane 
\[
\uhp^d = \{x=(x_1,\dots, x_d) \in \C^d: \Im x_1,\dots, \Im x_d>0\}
\]
via conformal mapping where a boundary singularity is sent
to the origin.  
For instance, the polynomial $p_0(z,w) = 2-z-w$ converts to
\[
(1-ix)(1-iy) p_0\left(\frac{1+ix}{1-ix}, \frac{1+iy}{1-iy}\right) = 
-2i( x+y - 2ixy).
\]
Reflection in this setting becomes conjugation of coefficients:
\[
\bar{p}(x,y) = \overline{p(\bar{x},\bar{y})}.
\]
Thus, the favorite rational inner function $\frac{2zw-z-w}{2-z-w}$ becomes (after disregarding constants in front)
\begin{equation} \label{favuhp}
\frac{x+y+2ixy}{x+y-2ixy}.
\end{equation}
The key local result needed for proving Theorem \ref{nontan}
is the following:

\begin{theorem}
Suppose $p \in \C[x_1,\dots, x_d]$ has no zeros in $\uhp^d$ and $p(0)=0$.
Write out the homogeneous expansion of $p(x) = \sum_{j=M}^{N} P_j(x)$
where $P_j$ is homogeneous of degree $j$ and $P_M \not\equiv 0$.
Writing $P_{M+1} = A_{M+1} + i B_{M+1}$ for $A_{M+1}, B_{M+1} \in \R[x_1,\dots, x_d]$ we have the following.
\begin{enumerate}
\item $P_M$ has no zeros in $\uhp^d$ and there exists $\mu \in \T$ such that $\mu P_M \in \R[x_1,\dots, x_d]$.
\item Assuming $\mu=1$ in part (a) and $p$ has no factors in common 
with $\bar{p}$, we have $B_{M+1} \not\equiv 0$ and $\frac{A_M}{B_{M+1}}$ maps $\uhp^d$ into $\uhp$.
\end{enumerate}
\end{theorem}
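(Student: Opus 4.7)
For Part (1), my plan is to handle the two conclusions in sequence. The non-vanishing of $P_M$ on $\uhp^d$ follows from a Hurwitz-type rescaling: set $p_t(x) := t^{-M} p(tx)$ for $t > 0$; each $p_t$ is a polynomial nonvanishing on $\uhp^d$ (since $tx \in \uhp^d$) and $p_t \to P_M$ locally uniformly on $\C^d$ as $t \to 0^+$. Slicing along a complex line through any hypothetical zero of $P_M$ in $\uhp^d$ and appealing to the classical one-variable Hurwitz theorem then forces $P_M$ to be either identically zero or nowhere zero on $\uhp^d$; the hypothesis excludes the former. For the reality claim I use a G\aa rding hyperbolicity argument. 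Homogeneity gives $P_M(-x) = (-1)^M P_M(x)$, so $P_M$ is also nonvanishing on $-\uhp^d$; hence for any $v \in \R^d$ with all positive coordinates and any $x \in \R^d$, the polynomial $s \mapsto P_M(x + sv)$ of degree $M$ has only real roots (a non-real root $s$ would place $x + sv$ in $\uhp^d$ or $-\uhp^d$). Writing
\[
P_M(x + sv) = P_M(v) \prod_{j=1}^{M}(s - \lambda_j(x)), \qquad \lambda_j(x) \in \R,
\]
the symmetric-function expansion in $s$ shows $P_M/P_M(v)$ is real-valued on $\R^d$, so $\mu^{-1} P_M \in \R[x_1, \dots, x_d]$ with $\mu := P_M(v)/|P_M(v)| \in \T$.

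For Part (2), my central object is the rational inner function $\phi := \bar{p}/p$ on $\uhp^d$, which satisfies $|\phi| \le 1$ (the half-plane analog of $|\tilde p| \le |p|$ on $\DD^d$, by maximum modulus using $|\phi| = 1$ on $\R^d$). Under $\mu = 1$, $P_M = A_M$ is real and $p - \bar{p} = 2i(B_{M+1} + B_{M+2} + \cdots)$; set
\[
g(x) := \frac{2i p(x)}{p(x) - \bar{p}(x)} = \frac{2i}{1 - \phi(x)}.
\]
Coprimality of $p$ and $\bar{p}$ prevents $\phi \equiv 1$, and the maximum principle then rules out $\phi(x_0) = 1$ for any $x_0 \in \uhp^d$, so $g$ is holomorphic on all of $\uhp^d$. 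Because $1 - \phi$ takes values in the closed disk $\{|w - 1| \le 1\} \subset \{\Re w \ge 0\}$, we have $\Re[1/(1-\phi)] \ge 1/2$ and hence $\Im g \ge 1$, so $g$ maps $\uhp^d$ into $\uhp$.

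To extract $A_M/B_{M+1}$, take a scaling limit along rays $x = tv$ with $v \in \uhp^d$ and $B_{M+1}(v) \ne 0$:
\[
t \cdot g(tv) = \frac{t^{M+1} A_M(v) + O(t^{M+2})}{t^{M+1} B_{M+1}(v) + O(t^{M+2})} \;\longrightarrow\; \frac{A_M(v)}{B_{M+1}(v)} \quad \text{as } t \to 0^+.
\]
Since $tg(tv) \in \uhp$ for every $t > 0$, the limit lies in $\overline{\uhp}$. The ratio $A_M/B_{M+1}$ is a nonconstant rational function (the total degrees of numerator and denominator differ), so on the connected dense open subset $\uhp^d \setminus \{B_{M+1} = 0\}$ the open mapping theorem upgrades the image to the open upper half-plane.

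The main obstacle is proving $B_{M+1} \not\equiv 0$ from the coprimality hypothesis. If $B_{M+1} \equiv 0$, then $p - \bar{p}$ vanishes to order at least $M + 2$ at the origin while $p$ itself vanishes only to order $M$; thus $p$ and $\bar{p}$ share not just the tangent cone $\{A_M = 0\}$ but also their next-order jet. I expect to derive a contradiction by a local-ring argument: apply a Weierstrass-preparation-style normal form to $p$ at the origin (leveraging non-vanishing on $\uhp^d$), use the high-order agreement with $\bar{p}$ to produce a common irreducible factor in the ring of convergent power series at $0$, then lift via Gauss's lemma and unique factorization to a common polynomial factor of $p$ and $\bar{p}$---contradicting $\gcd(p, \bar{p}) = 1$. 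Making this local-to-global step precise, while keeping careful track of intersection multiplicities and the stability constraint, is where the principal difficulty lies.
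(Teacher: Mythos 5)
Your Part (1) is correct and self-contained: the Hurwitz rescaling $p_t(x)=t^{-M}p(tx)\to P_M$ cleanly shows $P_M$ is nonvanishing on $\uhp^d$, and the G\aa rding-style slicing along $x+sv$ with $v$ having positive coordinates, together with $(-1)^M$-homogeneity, correctly forces all roots of $s\mapsto P_M(x+sv)$ to be real and hence $P_M/P_M(v)\in\R[x]$. Note that the paper does not actually supply a proof of this theorem; it cites \cite{BKPS} for the whole statement and \cite{ABG} for part (1), so there is no internal argument to compare against.

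In Part (2), the mechanism you use for the mapping property is sound: $g=2i/(1-\phi)$ has $\Im g\ge 1$, the radial scaling $t\,g(tv)\to A_M(v)/B_{M+1}(v)$ lands in $\overline{\uhp}$, and a nonconstant ratio of homogeneous polynomials of different degrees is open, so the image is in $\uhp$. But there is a genuine gap at the crucial claim $B_{M+1}\not\equiv 0$, and you are candid that you only have a sketch. The sketch is unlikely to close as written: two power series agreeing through order $M+1$ at the origin do not in general share an irreducible local factor (e.g.\ $x^2+y^3$ and $x^2+y^3+y^5$ agree to high order yet are coprime in $\C\{x,y\}$), so Weierstrass preparation plus Gauss's lemma will not produce a common factor from jet agreement alone. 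What actually forces $B_{M+1}\not\equiv 0$ is a growth constraint coming from the Pick (Nevanlinna) class, not a commutative-algebra obstruction.

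Here is a way to close the gap that fits naturally with the machinery you already set up. Consider the Cayley-transformed Pick function
\[
q(x) \;=\; i\,\frac{1+\phi(x)}{1-\phi(x)} \;=\; \frac{i\,(p+\bar p)}{\,p-\bar p\,}
\;=\; \frac{A_M + A_{M+1}+\cdots}{\,B_{M+1}+B_{M+2}+\cdots\,},
\]
which maps $\uhp^d$ into $\uhp$. Fix $v\in(0,\infty)^d$ generic, so that $A_M(v)\ne 0$ and $t\mapsto p(tv)-\bar p(tv)$ is not identically zero (the latter uses coprimality: $p\ne\bar p$). For $t\in\uhp$ we have $tv\in\uhp^d$, so $q_v(t):=q(tv)$ is a rational one-variable Pick function. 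Its Nevanlinna representation $q_v(t)=a+bt+\sum_j c_j\frac{1+t_jt}{t_j-t}$ with $b\ge 0$, $c_j>0$, shows that $q_v$ has at most simple poles on $\R$. But if $B_{M+1}\equiv 0$, then the numerator of $q_v$ has a zero of order exactly $M$ at $t=0$ while the denominator vanishes to order at least $M+2$, producing a pole of order at least $2$ at $t=0$. This contradiction gives $B_{M+1}\not\equiv 0$, and the rest of your argument then goes through. You may also want to observe explicitly that the open-mapping step rules out $B_{M+1}$ vanishing anywhere on $\uhp^d$, so that $A_M/B_{M+1}$ is genuinely defined on all of $\uhp^d$ as the theorem asserts.
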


This is stated as the ``Homogeneous Expansion'' theorem in \cite{BKPS} where
the first part is attributed to \cite{ABG}.  The next step is proving that a rational function
with given homogeneous expansions 
\begin{equation} \label{homograt}
\frac{\sum_{j=K}^{L} Q_{j}(x)}{\sum_{j=M}^{N} P_j(x)}
\end{equation}
has a non-tangential limit at $0$ if and only if $K=M$ and $Q_K$ is a multiple of $P_M$.
This property is evident in the example \eqref{favuhp} where $M=1$ and $P_1 = x+y$.
Finally, one proves that this property holds for bounded rational functions.

Next, we consider boundedness as well as boundary integrability near a boundary point. 
Boundedness is interesting because of the natural question of simply characterizing 
the bounded rational functions.  Square integrability is of interest because of the previous section
where certain sums of squares formulas depend on spaces of square integrable rational functions.
Based on the above, one might hope that boundedness near a boundary point (meaning
boundedness on $\uhp^d$ intersected with a neighborhood of $0$)
would simply depend on the condition that $Q_M = c P_M$ as in \eqref{homograt}.
Unfortunately, this is only true in the simplest examples like \eqref{favuhp}.
\begin{example}
The polynomial
\[
p_2(x,y) = x+y - ix^2 -ixy - 2x^3-6x^2y + 4ix^3y  
\]
has no zeros in $\uhp^2$
while 
\[
\frac{x+y}{p_2(x,y)}
\]
is unbounded near $(0,0)$.  
Indeed, we can show it is unbounded on $\R^2\setminus\{(0,0)\}$
near $(0,0)$
and then it will be unbounded on $\uhp^2$ near $0$
by continuity.
Setting $y = -x -4 x^3$ we have
\[
\frac{-4x^3}{24x^5-16i x^6} = \frac{1}{-6x^2 + 4ix^3}
\]
which goes to $\infty$ as $x\to 0$.
This is Example 5.6 of \cite{BKPS}. $\diamond$
\end{example}
 
To understand what is going on we look at the \hl{Puiseux series}
for the branches of $p(x,y)$ with no zeros in $\uhp^2$ but $p(0,0)=0$. 
Recall that by the \hl{Newton-Puiseux theorem}, the zero set of $p$
near $(0,0)$ can be parametrized (injectively) by maps of the form
\[
t\mapsto (t^n, \phi(t))
\]
where $n$ is a positive integer and $\phi$ is analytic in a neighborhood of $0 \in \C$
with $\phi(0) = 0$.  
This leads to a factorization of $p$ with factors of the form 
\[
\prod_{j=1}^{n} (y - \phi(\mu^j x^{1/n}))
\]
where $\mu = \exp(2\pi i/n)$.   
(See \cite{simonbasic}, Theorems 3.5.1, 3.5.2.)
In the current case, we have the property that whenever $t^n \in \uhp$
then $\phi(t) \notin \uhp$ (i.e. $\phi(t) \in -\overline{\uhp}$ the closed lower half plane).  
It turns out that we can characterize this.
It helps to replace $\phi$ with its negative, so that our characterization has positive parameters.

\begin{theorem} \label{localparam}
Suppose $\phi(t)$ is analytic in a neighborhood of $0\in \C$ and $\phi(0)=0$.
Let $n$ be a positive integer and suppose that 
\[
t\mapsto (t^n, -\phi(t))
\]
is an injection into $\C^2 \setminus \uhp^2$.
Then, there are two cases:
\begin{enumerate}
\item (real stable type) $n=1$, $\phi$ has real coefficients and $\phi'(0)>0$.

\item (pure stable type) There exist a positive integer $L$, $q\in \R[t]$ with $\deg q <2L$, $q(0)=0$, $q'(0)>0$, and $\psi(t)$
analytic in a neighborhood of $0$ with $\Im(\psi(0)) > 0$ such that
\[
\phi(t) = q(t^n) + t^{2Ln} \psi(t).
\]
\end{enumerate}
\end{theorem}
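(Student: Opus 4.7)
The plan is to translate the parametrization condition into a collection of half-plane mapping properties of $\phi$ on $n$ sectors, and then extract the coefficient structure by combining a symmetric average over $n$-th roots of unity with a leading-order angular analysis.

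Let $\mu = e^{2\pi i/n}$ and $S_j = \{t : j\cdot 2\pi/n < \arg t < j\cdot 2\pi/n + \pi/n\}$ for $j=0,1,\dots,n-1$; these are the $n$ open sectors in which $t^n \in \uhp$. The hypothesis $(t^n, -\phi(t)) \in \C^2 \setminus \uhp^2$ becomes $\Im \phi(t) \geq 0$ on each $S_j$. Parametrizing $S_j$ by $s = t^n \in \uhp$ via the principal branch of $s^{1/n}$ yields $n$ analytic Herglotz (Nevanlinna--Pick) maps
\[
\Phi_j(s) = \phi(\mu^j s^{1/n}) = \sum_{k\geq 1} a_k \mu^{jk} s^{k/n} : \uhp \to \overline{\uhp}.
\]

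The first key step is to consider the symmetric average $Q(s) := \frac{1}{n}\sum_{j=0}^{n-1}\Phi_j(s) = \sum_{l\geq 1} a_{nl} s^l$, which is a convex combination of Herglotz maps, hence Herglotz, but now analytic at $s=0$. Continuity forces $\Im Q(x) \geq 0$ for real $x$ near $0$, and since $\Im Q(x) = \sum_l (\Im a_{nl}) x^l$ is a real-analytic function of $x$ that never changes sign, either $\Im a_{nl} = 0$ for every $l$, or the smallest $l$ with $\Im a_{nl} \neq 0$ must be even, yielding some $L \geq 1$ with $\Im a_{n\cdot 2L} > 0$ and $a_{nl} \in \R$ for all $l < 2L$.

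The second step is a leading-order angular analysis on the sectors. If $m$ is the smallest index with $a_m \neq 0$, then for $t \in S_j$, $a_m t^m$ sweeps an arc of angular width $m\pi/n$ positioned at $\arg a_m + 2\pi jm/n$. Requiring all $n$ such arcs to lie in $[0,\pi]$ modulo $2\pi$ forces $m=n$ and $\arg a_m = 0$, so $a_n > 0$; the cases $m<n$ and $m>n$ are incompatible with the geometric constraints after analyzing the spacing of the $n/\gcd(m,n)$ distinct arc positions. I expect the main obstacle to be the refinement of this argument that rules out every $a_k \neq 0$ with $n \nmid k$ and $k < 2Ln$. The symmetric average isolates only $n$-multiple coefficients, so one instead chooses paths inside each $S_j$ along which $\Im(a_n t^n)$ decays strictly faster than $|a_k t^k|$ — for example letting $\arg t$ approach a sector boundary at rate $|t|^{k-n}$ — and then uses the simultaneous sign constraints across all $n$ sectors, combined with $\gcd(k,n)$ arithmetic on the roots of unity, to obtain a contradiction.

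With the coefficient structure in hand, the dichotomy follows. If all $a_{nl} \in \R$ (i.e., $L=\infty$) and every non-$n$-multiple coefficient vanishes, then $\phi(t) = q(t^n)$ for some real-coefficient $q$ with $q'(0) = a_n > 0$; since $\phi(\mu t) = \phi(t)$, injectivity of $t\mapsto (t^n, -\phi(t))$ forces $n=1$, giving Case (1). Otherwise $L < \infty$; defining $q(s) = \sum_{l=1}^{2L-1} a_{nl} s^l \in \R[s]$ and $\psi$ by $\phi(t) - q(t^n) = t^{2Ln}\psi(t)$, the prior steps yield $\deg q < 2L$, $q(0) = 0$, $q'(0) = a_n > 0$, and $\psi(0) = a_{n\cdot 2L}$ with $\Im \psi(0) > 0$, placing us in Case (2).
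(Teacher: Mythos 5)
Your symmetric average $Q(s) = \frac{1}{n}\sum_j \Phi_j(s)$ is a genuinely different device from what the paper does. The paper never forms a Herglotz function in the $s=t^n$ variable; instead it defines $\phi_0(t) = \sum_j \Re(a_{jn})\,t^{jn}$, subtracts it, and analyzes the leading term of $\tilde\phi = \phi - \phi_0$ in a single limiting argument along the rays $\theta \in \frac{\pi}{n}\mathbb{Z}$. Your averaging isolates the $n$-multiple coefficients more transparently and makes the Herglotz structure explicit, which is appealing; the paper's subtraction handles the $n$-multiple and non-$n$-multiple coefficients simultaneously, which is more economical.

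The gap is in the step you flag yourself. Your proposed fix --- choosing paths inside each sector along which $\Im(a_n t^n)$ decays faster than $|a_k t^k|$, ``letting $\arg t$ approach a sector boundary at rate $|t|^{k-n}$'' --- solves a problem that isn't actually there, and leaves the real work (the ``$\gcd(k,n)$ arithmetic'') as a hand wave. Once you know from the averaging that $a_{nl}\in\R$ for $l<2L$, you should pass \emph{by continuity} to the boundary rays $\theta = j\pi/n$ themselves: there $t^n\in\R$, so every term $a_{nl}t^{nl}$ with $l<2L$ is real and contributes nothing to $\Im\phi$. Hence if $m_0 < 2Ln$ is the smallest index with $n\nmid m_0$ and $a_{m_0}\ne 0$, writing $a_{m_0}=|a_{m_0}|e^{i\beta}$ and letting $|t|\to 0$ along these rays gives
\[
\beta/\pi + \tfrac{m_0}{n}\,\mathbb{Z} \subset [0,1] + 2\mathbb{Z},
\]
exactly as in the paper. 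No rate control is needed; the dominant real terms vanish identically on the rays. The content that remains is the arithmetic lemma (Lemma 3.4 of \cite{gKint}): if $\gcd(m_0,2n)<n$, the set on the left is a coset of a subgroup of $\R/2\mathbb{Z}$ with more than two elements spaced less than $1$ apart, so it cannot sit in an interval of length $1$; this forces $n\mid m_0$, the contradiction. Your sketch names this lemma only obliquely, but it is the crux --- without stating and proving it, the argument is not complete. Once you supply it, your dichotomy and the construction of $q$ and $\psi$ at the end are correct and match the paper's conclusion.
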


In the first case, $p(x,y)$ will have an analytic factor $y + \phi(x)$ with $\phi'(0)>0$ where
$\phi$ has real coefficients.  This implies $p$ has a smooth/analytic
 curve of zeros in the distinguished boundary $\R^2$.
In the second case, $p(x,y)$ will have a factor of the form
\[
\prod_{j=1}^{n}(y + q(x) + x^{2L}\psi(\mu^j x^{1/n})).
\]
We will prove the theorem here because its original statement (as Lemma C.3 of \cite{gKintreg}) 
neglected the real coefficient case (as it was not relevant in that context).

\begin{proof}
Let us write out the initial term of $\phi(t) = a t^r + \dots$ with $a\ne 0$.
We first show that $r$ equals $n$ and $a>0$.
Writing $t = |t| e^{i\theta}$, we have that for $n\theta \in (0,\pi) + 2\pi \ints$, $\phi(|t|e^{i\theta}) \in \overline{\uhp}$.
Writing $a = |a| e^{i\alpha}$ we have
\[
\lim_{|t|\searrow 0} \frac{\phi(|t|e^{i\theta})}{ |t|^r} = |a| e^{i\alpha}e^{ir\theta}
\]
and therefore $n\theta \in (0,\pi) + 2\pi \ints$ implies $\alpha + r\theta \in [0,\pi] + 2 \pi \ints$.

This means that intervals of the form 
$(\alpha + (2j) \frac{\pi r}{n}, \alpha + (2j+1) \frac{\pi r}{n})$  for $j\in \ints$
must be contained in $[0,\pi] + 2\pi \ints$.  
Since the width of the open intervals is $r\pi/n$ we must have $r \pi/n \leq \pi$ so that $r\leq n$.
The contrapositive says if $\alpha+r\theta \in (\pi,2\pi) + 2\pi \ints$ then $n\theta \in [\pi,2\pi] + 2 \pi \ints$
and a similar argument shows $n\leq r$.  In order
to have $(\alpha, \alpha + \pi) \subset [0,\pi]+2\pi \ints$ 
we must have $\alpha$ an even multiple of $\pi$.  Therefore, $a = |a| e^{\alpha} > 0$.

Next we note that by continuity we must have that whenever $t^n \in \R$ then $\phi(t) \in \overline{\uhp}$.
Writing $\phi(t) = \sum_{j=n}^{\infty} a_j t^j$, we define
\[
\phi_0(t) = \sum_{j=1}^{\infty} \Re(a_{jn}) t^{jn} = a t^n + \Re(a_{2n})t^{2n} + \cdots
\]
which just extracts the real coefficients of powers $t^{jn}$.
If $\phi(t) = \phi_0(t)$, then $t\mapsto (t^n,\phi(t))$ is not
injective unless $n=1$.  Thus, in this case $\phi$ has the first form of the theorem.

So, suppose $\phi(t) \ne \phi_0(t)$.  Since $t^n \in \R$ implies $\phi_0(t) \in \R$,
the function 
\[
\tilde{\phi}(t) = \phi(t) - \phi_0(t)
\]
has the property that $t^n \in \R$ implies $\tilde{\phi}(t) \in \overline{\uhp}$.
Let us write 
$\tilde{\phi}(t) = \sum_{j=m}^{\infty} b_j t^j =  b t^{m} + \cdots$, where $b=b_m = |b| e^{i\beta} \ne 0$ and $m\geq 2$.  
Writing $t = |t|e^{i\theta}$ and taking a limit as before we see that $n\theta \in \pi \ints$ implies 
$\beta + m\theta \in [0,\pi] + 2\pi \ints$.  
In other words, after dividing by $\pi$
\[
\beta/\pi + \frac{m}{n} \ints \subset [0,1] + 2\ints.
\]
By an elementary lemma in \cite{gKint} (Lemma 3.4),
this implies $m/n$ is an integer and if $m/n$ is odd then $\beta/\pi \in \{0,1\}$.
The latter condition would mean the first term of $\tilde{\phi}(t)$ has the form $b t^{(2L+1)n}$
with $b \in \R$.  But $\phi_0$ removed all such terms, so we must have
$m =2Ln$ for some $L$.  In this case, $\tilde{\phi}(t)$ begins $b t^{2Ln}$ and $b$ must 
be purely imaginary (and nonzero) since $\phi_0$ removed the real parts of such terms.
Since $\beta \in [0,\pi]$, we must have $\beta = \pi/2$; namely $b$ is a positive multiple of $i$.
Thus, 
\[
\phi(t) = \phi_0(t) + \tilde{\phi}(t)
\]
which we can write as $q(t^n) + t^{2Ln} \psi(t)$ 
with $q$ and $\psi$ as in the theorem statement. \qed
\end{proof}

\begin{example}
The polynomial $p = x+y-2ixy$ factors as 
\[
(1-2ix)\left(y + \frac{x}{1-2ix}\right) = (1-2ix)(y + x + 2ix^2 + \text{ higher order})
\]
and therefore $\phi(x) = \frac{x}{1-2ix} = x + x^2 \frac{2i}{1-2ix}$ is an example of the
theorem with $q(x)=x$, $L=1$, $n=1$, $\psi(x) = \frac{2i}{1-2ix}$.  
More complicated examples are presented in \cite{BKPS}.$\diamond$
\end{example}

If $p(x,y)$ has no zeros in $\uhp^2$ and no factors in common with $\bar{p}$,
then all of $p$'s branches are of the second type called pure stable type.
This is because branches of the first type (real stable type) have infinitely many
common zeros on $\R^2$ and this would imply $p$ and $\bar{p}$ have a common
factor by B\'ezout's theorem.
We obtain the following complete local description of such $p$.
We use the notation $\C\{x,y\}$ to denote the set of power series
in $x,y$ that converge in a neighborhood of the origin.

\begin{theorem}[Theorem 2.16 in \cite{BKPS}] \label{purestable}
Suppose $p\in \C[x,y]$ has no zeros in $\uhp^2$, no factors in 
common with $\bar{p}$, and $p(0,0)=0$.
Then, there exist $u \in \C\{x,y\}$ with $u(0,0)\ne 0$,
positive integers $2L_1,\dots, 2L_k$,
$M_1,\dots, M_k$,
polynomials $q_1,\dots, q_k \in \R[x]$ where 
\begin{itemize}
\item $\deg q_j < 2L_j$
\item $q_j(0)=0$, $q_j'(0)>0$
\end{itemize}
and $\psi_1,\dots, \psi_k \in \C\{x\}$ with  
$\Im \psi_j(0)>0$
such that
\[
p(x,y) = u(x,y)\prod_{j=1}^{k} \prod_{m=1}^{M_j} (y + q_j(x) + x^{2L_j} \psi_{j}(\mu_j^m x^{1/M_{j}}))
\]
where $\mu_j = \exp(2\pi i/M_j)$.
\end{theorem}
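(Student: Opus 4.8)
The plan is to reduce the statement to Theorem~\ref{localparam} by running a Newton--Puiseux factorization of $p$ at the origin. First I would dispose of degenerate divisibility: since $\bar{x}=x$, any factor of $x$ in $p$ would also divide $\bar{p}$, which is excluded, so $p(0,y)\not\equiv 0$, and the Weierstrass preparation theorem writes $p = u\cdot W$ with $u\in\C\{x,y\}$, $u(0,0)\ne 0$, and $W\in\C\{x\}[y]$ a Weierstrass polynomial, monic in $y$ with $W(0,y)$ a power of $y$. Factoring $W$ into irreducible factors in $\C\{x\}[y]$ (a unique factorization domain) and invoking the Newton--Puiseux theorem (\cite{simonbasic}, Theorems 3.5.1, 3.5.2, as cited above), each monic irreducible factor has the shape
\[
\prod_{m=1}^{M}\bigl(y-\phi(\mu^m x^{1/M})\bigr),\qquad \mu=\exp(2\pi i/M),
\]
for a positive integer $M$ and a power series $\phi\in\C\{t\}$ with $\phi(0)=0$, with $t\mapsto(t^M,\phi(t))$ an injective parametrization of that factor's zero set near the origin.

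Next I would push the stability hypothesis down to each branch. An irreducible factor of $W$ divides $p$ and so inherits the property of having no zeros in $\uhp^2$; hence for all small $t$ we have $(t^M,\phi(t))\notin\uhp^2$, that is, $\Im(t^M)>0$ implies $\Im\phi(t)\le 0$. Consequently $t\mapsto(t^M,-(-\phi)(t))$ is an injection into $\C^2\setminus\uhp^2$, which is exactly the hypothesis of Theorem~\ref{localparam} with $n=M$ and with $-\phi$ playing the role of its ``$\phi$''. That theorem offers two cases. In the real stable type, $M=1$ and the factor is $y-\phi(x)$ with $\phi\in\R\{x\}$, so the real arc $\{(x,\phi(x)):x\in(-\epsilon,\epsilon)\}$ lies in the zero set of $p$; since $\bar{p}(x,y)=\overline{p(\bar{x},\bar{y})}=\overline{p(x,y)}$ for real $(x,y)$, this arc also lies in the zero set of $\bar{p}$, and two polynomials with infinitely many common zeros must share a factor (the B\'ezout argument already used in the paragraph preceding the theorem) --- contradicting the hypothesis, so this case cannot occur. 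Therefore every branch is of pure stable type: $-\phi(t)=q(t^M)+t^{2LM}\psi(t)$ with $q\in\R[t]$, $\deg q<2L$, $q(0)=0$, $q'(0)>0$, and $\psi\in\C\{t\}$ with $\Im\psi(0)>0$. Using $(\mu^m x^{1/M})^M=x$ and $(\mu^m x^{1/M})^{2LM}=x^{2L}$, the corresponding factor of $p$ becomes
\[
\prod_{m=1}^{M}\bigl(y+q(x)+x^{2L}\psi(\mu^m x^{1/M})\bigr).
\]

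Finally, indexing the irreducible factors of $W$ (counted with multiplicity) by $j=1,\dots,k$, with data $M_j, L_j, q_j, \psi_j$ and $\mu_j=\exp(2\pi i/M_j)$, multiplying them together and restoring the unit $u$ yields $p$ in exactly the stated form. The genuinely substantive ingredient here is Theorem~\ref{localparam}, already proved above, so in the present argument the only delicate points are the sign bookkeeping when quoting that theorem and the standard but essential Newton--Puiseux facts that a primitive Puiseux parametrization is injective and that the symmetrized product $\prod_{m}(y-\phi(\mu^m x^{1/M}))$ genuinely lies in $\C\{x\}[y]$ rather than merely in $\C\{x^{1/M}\}[y]$.
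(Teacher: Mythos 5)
Your proposal is correct and follows precisely the line of reasoning the paper sketches in the paragraph preceding the theorem statement: factor $p$ locally via Newton--Puiseux, feed each branch into Theorem~\ref{localparam}, rule out the real stable case by the B\'ezout argument (real stable branches would give $p$ and $\bar{p}$ infinitely many common real zeros, hence a common factor), and assemble the pure stable branches. The Weierstrass preparation step, the observation that $x\nmid p$ since $x=\bar{x}$, and the careful sign translation between the branch parametrization $t\mapsto(t^M,\phi(t))$ and the hypothesis $t\mapsto(t^n,-\phi(t))$ of Theorem~\ref{localparam} are details the paper leaves implicit, and you handle them correctly.
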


The data $2L_1,\dots, 2L_k, M_1,\dots, M_k, q_1(x), \dots, q_k(x)$
is the most important local information about $p$.
In fact, if we define
\begin{equation} \label{pbrack}
[p](x,y) = \prod_{j=1}(y+ q_j(x) + i x^{2L_j})^{M_j}
\end{equation}
then Theorem 1.2 of \cite{BKPS} says that
\[
\frac{[p](x,y)}{p(x,y)}
\]
is bounded above and below near $0$ in $\uhp^2$.  
Thus, for many problems we can replace $p$ with the simpler polynomial $[p]$.
At the same time, Theorem \ref{purestable} makes it
possible to construct polynomials with no zeros in $\uhp^2$ or $\DD^2$
with non-trivial Puiseux structure at a boundary point---something that
was previously not known.  The construction amounts to writing out a proposed
local decomposition such as
\[
P(x,y) = (y+x +ix^2 + x^{5/2})(y+x+ix^2 - x^{5/2}) = (y+ x +ix^2) - x^5
\]
which one can show has no zeros in the $\uhp^2 \cap \DD_{\epsilon}^2$.
Then, we can find a pair of conformal maps $\phi_1(x), \phi_2(y)$ that
map $\uhp$ to a disk in $\uhp$ that is tangent to $\R$ at $0$
so that $Q = P(\phi_1(x), \phi_2(y))$ has no zeros in $\uhp^2$ yet
we do not change the fact that a local factorization of $Q$ requires
Puiseux series.  We can clear the denominators of $Q$ to get a polynomial
non-vanishing on $\uhp^2$ with non-trivial Puiseux structure at $(0,0)$.
This process blurs the original data associated with $P$, so we pose it as
a problem later (Problem \ref{problemlocal}) to show that we can construct
global stable polynomials with prescribed local data.

Theorem \ref{purestable} also led to the following conjecture and now theorem characterizing
rational functions bounded near a boundary point.

\begin{theorem}[Theorem 5.2 of \cite{BKPS}, Theorem 3 of \cite{kollar}] \label{bounded}
Assume $p$ as in Theorem \ref{purestable}.
Given $q \in \C[x,y]$, the rational function $q/p$ is bounded in
a neighborhood of $(0,0)$ intersected with $\uhp^2$ if and only if
$q$ belongs to the product ideal
\[
\prod_{j=1}^{k} (y+ q_j(x), x^{2L_j})^{M_j}
\]
in $\C\{x,y\}$.  
\end{theorem}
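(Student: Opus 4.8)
The plan is to reduce to the normalized polynomial $[p]$ and then treat the two implications separately. By Theorem 1.2 of \cite{BKPS} the ratio $[p]/p$ is bounded above and below near the origin in $\uhp^2$, and the product ideal in the statement depends only on the local data $q_1,\dots,q_k,L_1,\dots,L_k,M_1,\dots,M_k$, which $p$ and $[p](x,y)=\prod_{j=1}^{k}(y+q_j(x)+ix^{2L_j})^{M_j}$ share. Hence $q/p$ is bounded near $0$ in $\uhp^2$ if and only if $q/[p]$ is, and it suffices to prove the equivalence with $p$ replaced by $[p]$; the key lemma below also shows $[p]$ has no zeros in $\uhp^2$ near $0$, so the statement makes sense for $[p]$.

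For sufficiency, suppose $q\in\prod_{j=1}^{k}(y+q_j(x),x^{2L_j})^{M_j}$. Then $q$ is a $\C\{x,y\}$-linear combination, with bounded coefficients, of the finitely many generators $\prod_{j=1}^{k}(y+q_j(x))^{a_j}x^{2L_jb_j}$ (one for each choice of $a_j,b_j\ge 0$ with $a_j+b_j=M_j$); so it is enough to bound each quotient $\prod_{j}(y+q_j)^{a_j}x^{2L_jb_j}\,/\,\prod_{j}(y+q_j+ix^{2L_j})^{M_j}$ near $0$ in $\uhp^2$. Expanding each $(y+q_j(x))^{a_j}$ by the binomial theorem in the two quantities $(y+q_j(x)+ix^{2L_j})$ and $x^{2L_j}$ and using $a_j+b_j=M_j$, one checks that the $j$-th factor is a constant-coefficient combination of powers $\left(x^{2L_j}/(y+q_j(x)+ix^{2L_j})\right)^{r}$ with $b_j\le r\le M_j$, all exponents being nonnegative. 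So everything reduces to the lemma: if $q\in\R[x]$, $q(0)=0$, $q'(0)>0$ and $L\ge 1$, then $x^{2L}/(y+q(x)+ix^{2L})$ is bounded near $0$ in $\uhp^2$. To prove this I would show $|y+q(x)+ix^{2L}|\ge c|x|^{2L}$ there. If $|y+q(x)|\ge 2|x|^{2L}$ this follows from the triangle inequality. Otherwise $|y+q(x)|<2|x|^{2L}$; since $x\in\uhp$ we have $\Im q(x)=q'(0)\,\Im x\,(1+O(|x|))$, which is positive, and $\Im y>0$, so $\Im q(x)=\Im(y+q(x))-\Im y<|y+q(x)|<2|x|^{2L}$ forces $\Im x=O(|x|^{2L})$; then $x$ is nearly real, $\Re(x^{2L})=(\Re x)^{2L}(1+o(1))\ge\frac{1}{2}|x|^{2L}$ for $x$ small, and $\Im(y+q(x)+ix^{2L})=\Im y+\Im q(x)+\Re(x^{2L})\ge\frac{1}{2}|x|^{2L}$.

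For necessity, assume $q/[p]$ is bounded near $0$ in $\uhp^2$. The plan is, for each $j$ and each of the $M_j$ conjugate branches of $[p]$, to run a family of test points inside $\uhp^2$ approaching that branch in a controlled way --- for instance along $y=-q_j(x)+ix^{2L_j}$ with $x$ real and small, so that $y\in\uhp$ and $y+q_j(x)+ix^{2L_j}=2ix^{2L_j}$, and more generally along the Puiseux parametrizations supplied by Theorem \ref{purestable}. Boundedness of $q/[p]$ on such a family forces $q$, after expansion around the branch, to vanish to the order dictated by the power of $x$ in the denominator; in the sample family one already reads off $q(x,-q_j(x))=O(|x|^{2L_j})$. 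Collecting these vanishing conditions over all $j$ and all branches and converting them into generators by repeated Weierstrass preparation and division should yield exactly $q\in\prod_{j=1}^{k}(y+q_j(x),x^{2L_j})^{M_j}$.

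I expect the last step to be the main obstacle. The delicate points are (i) choosing test curves that stay in $\uhp^2$ while making $[p]$ as small as the true local order permits --- the honest branches involve the $\psi_j$, not merely $ix^{2L_j}$, so one must check these corrections do not spoil the estimates, which is precisely where Theorem 1.2 of \cite{BKPS} and the structure in Theorem \ref{purestable} are needed --- and (ii) passing from ``$q$ vanishes to the correct order along every branch'' to membership in the \emph{product} ideal rather than the intersection of the individual powers, which needs care exactly when several of the $q_j$ coincide or are tangent to high order. For this I would follow the argument of \cite{BKPS}, drawing on the alternative treatment in \cite{kollar} where it streamlines the ideal-theoretic bookkeeping.
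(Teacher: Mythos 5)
The paper does not actually prove Theorem~\ref{bounded}; it is stated as a citation to Theorem~5.2 of \cite{BKPS} and Theorem~3 of \cite{kollar}, with a remark that a further proof appears in \cite{gKint}. So there is no in-paper proof to compare against, and your proposal must be judged on its own.

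Your reduction to $[p]$ via the comparability $[p]\asymp p$ from Theorem~1.2 of \cite{BKPS} is sound, and your sufficiency argument is essentially complete and quite clean. The heart of it is the lemma that $|y+q(x)+ix^{2L}|\gtrsim |x|^{2L}$ on $\uhp^2$ near the origin; the two-case argument works, with the one small point worth stating more carefully being that $\Im q(x)=\Im x\,(q'(0)+O(|x|))$ because $q$ has \emph{real} coefficients (so $\Im(x^n)$ is divisible by $\Im x$), which is what makes the estimate $\Im x = O(|x|^{2L})$ legitimate. The binomial expansion of $(y+q_j)^{a_j}x^{2L_jb_j}/(y+q_j+ix^{2L_j})^{M_j}$ into nonnegative powers of $x^{2L_j}/(y+q_j+ix^{2L_j})$ is correct since $a_j+b_j=M_j$. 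This is a genuinely elementary route to one implication.

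The necessity direction, however, is where the real content of the cited theorems lies, and your proposal there is a sketch with a gap you yourself flag. Testing along $y=-q_j(x)+ix^{2L_j}$ for real $x$ gives $q(x,-q_j(x))=O(|x|^{2L_j})$, i.e.\ one generator's worth of vanishing, but you still need to (a) handle all of the $M_j$ Puiseux branches (which carry the $\psi_j$ corrections, not just $ix^{2L_j}$, and $\psi_j(0)$ may have nonzero real part, so the naive test curve can leave $\uhp^2$ or fail to minimize $|[p]|$), and (b) promote branchwise vanishing to membership in the \emph{product} ideal $\prod_j(y+q_j,x^{2L_j})^{M_j}$ rather than merely the intersection $\bigcap_j(y+q_j,x^{2L_j})^{M_j}$, which are different precisely when the $q_j$ agree to high order — exactly the hard cases. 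Writing ``I would follow the argument of \cite{BKPS}\ldots drawing on \cite{kollar}'' at that point is an acknowledgment, not a proof; the ideal-theoretic step using Weierstrass preparation, the choice of admissible curves, and the product-versus-intersection distinction is where the substance of \cite{BKPS} and \cite{kollar} resides, and your proposal does not supply it.
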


A different proof of this theorem was given in \cite{gKint}.  
For example, for $p=x+y-2ixy$, the numerators for locally
bounded rational functions belong to the ideal generated by $(y+x, x^2)$.

Important prior work related to Theorems \ref{purestable} and \ref{bounded}
was done in the papers \cite{BPS1}, \cite{BPS2} where the local geometry
of the zero set of $p$ was connected to the local geometry of
the level sets of rational inner functions now written in 
the upper half plane setting as $\phi(x,y) = \bar{p}(x,y)/p(x,y)$
\begin{equation} \label{level}
\{(x,y) \in \R^2: \phi(z,w) = \alpha\} \qquad \alpha \in \T.
\end{equation}
The data $2L_1,\dots, 2L_k$ from Theorem \ref{purestable}
can be used to measure the rate that the zero set of $p$
approaches $\R^2$ near $(0,0)$.
Specifically, $K_0 = \max\{2L_1,\dots, 2L_k\}$ is called the local
\hl{contact order} of $p$ at $(0,0)$ because for real $x$ near $0$
and $\epsilon>0$ sufficiently small
\[
\inf\{ |\Im y|: p(x,y) = 0, |y| < \epsilon \} \approx |x|^{K_0}.
\]
Contact order is conformally invariant and in the setting
of $\DD^2$ and $\T^2$ one can define a global contact order $K$
by taking the maximum of local contact orders at all boundary
singularities.  A main result of Bickel-Pascoe-Sola \cite{BPS1},\cite{BPS2} is
the following characterization of the integrability of derivatives of rational
inner functions.

\begin{theorem}\label{derint}
Let $\phi(z,w) = \tilde{p}(z,w)/p(z,w)$ be a rational inner function 
on $\DD^2$ with global contact order $K$.  
Then, for $1\leq \mathfrak{p} <\infty$
\[
\frac{\partial \phi}{\partial z} \in L^{\mathfrak{p}}(\T^2) 
\text{ if and only if }
\frac{\partial \phi}{\partial w} \in L^{\mathfrak{p}}(\T^2) 
\text{ if and only if }
\mathfrak{p} < 1+\frac{1}{K}.
\]
\end{theorem}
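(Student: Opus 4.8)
The plan is to localize the question at the finitely many boundary singularities of $\phi$, transfer each one to the origin in $\uhp^2$, reduce to the Puiseux model of Theorem~\ref{purestable}, and finish with a one-variable integral whose convergence threshold is exactly $\mathfrak p<1+\frac1K$. First, since $\phi=\tilde p/p$ with $p,\tilde p$ having no common factor, the common zero set of $p$ and $\tilde p$ on $\T^2$ is zero-dimensional, hence a finite set $\zeta^{(1)},\dots,\zeta^{(s)}$ (if $s=0$ then $\phi$ is real-analytic on $\overline{\DD^2}$, $K=0$ by convention, and the statement is trivially true). Off these points $\phi$ is real-analytic on $\T^2$, so $\partial_z\phi$ and $\partial_w\phi$ extend real-analytically and are bounded there; hence $\partial_z\phi\in L^{\mathfrak p}(\T^2)$ iff its boundary restriction is $\mathfrak p$-integrable near each $\zeta^{(i)}$, and likewise for $w$. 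Fixing a singularity, I would apply a Cayley-type conformal change of variables sending $\zeta^{(i)}$ to $0\in\R^2$; it is bi-Lipschitz near the point and $\partial_z,\partial_w$ transform by factors bounded above and below near $0$, so $L^{\mathfrak p}$-integrability of $\partial_z\phi$ (resp.\ $\partial_w\phi$) near $\zeta^{(i)}$ is equivalent to $L^{\mathfrak p}$-integrability near $0\in\R^2$ of $\partial_x\Phi$ (resp.\ $\partial_y\Phi$), where $\Phi=\bar p/p$ is the transferred rational inner function --- now $p$ has no zeros in $\uhp^2$, no common factor with $\bar p$, and $p(0)=0$ --- and the local contact order $K_0$ of $p$ at $0$ is unchanged.

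Next I would invoke Theorem~\ref{purestable} to write $p=u\prod_{j=1}^{k}P_j$ with $u(0,0)\ne0$ and $P_j=\prod_{m=1}^{M_j}\bigl(y+q_j(x)+x^{2L_j}\psi_j(\mu_j^m x^{1/M_j})\bigr)$ single-valued in $(x,y)$, $q_j\in\R[x]$, $q_j(0)=0$, $q_j'(0)>0$, $\Im\psi_j(0)>0$. Then on $\R^2$ near $0$ we have $\Phi=(\bar u/u)\prod_{j=1}^k\Phi_j$ with $\Phi_j:=\overline{P_j}/P_j$ of modulus $1$ there, and since $\bar u/u$ is real-analytic and non-vanishing near $0$, $|\partial\Phi|\le C+\sum_{j}|\partial\Phi_j|$ near $0$. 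The key pointwise estimate, writing $u_j:=y+q_j(x)$ (so $u_j=0$ is to leading order the real part of the zero curve of $P_j$), is
\[
|\partial_x\Phi_j|+|\partial_y\Phi_j|\ \asymp\ \frac{|x|^{2L_j}}{u_j^{2}+|x|^{4L_j}}
\]
on a tube $\{|u_j|\le c|x|^{2L_j},\ |x|<\delta\}$ about that curve, with the complementary region (near $0$) contributing nothing new to the integrability threshold. The point making the numerator genuinely comparable to $|x|^{2L_j}$ rather than being killed by the higher Puiseux corrections is $\Im\psi_j(0)>0$, i.e.\ that every branch is of pure stable type; the toy model is $\Phi_j\approx(u_j-icx^{2L_j})/(u_j+icx^{2L_j})$ with $c>0$, which is the favorite RIF when $2L_j=2$.

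Granting this, the computation is short: for a single factor with exponent $2L$, substituting $u=y+q_j(x)$ and then $u=|x|^{2L}v$,
\[
\int_{|x|<\delta}\!\!\int_{|y|<\delta}\Bigl(\frac{|x|^{2L}}{u^{2}+|x|^{4L}}\Bigr)^{\mathfrak p}dy\,dx\ \asymp\ \Bigl(\int_{\R}\frac{dv}{(1+v^2)^{\mathfrak p}}\Bigr)\int_0^{\delta}x^{2L(1-\mathfrak p)}\,dx,
\]
and as the $v$-integral converges for all $\mathfrak p\ge1$, the left side is finite exactly when $2L(1-\mathfrak p)>-1$, i.e.\ $\mathfrak p<1+\frac1{2L}$. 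Hence each $\partial\Phi_j$ is $\mathfrak p$-integrable near $0$ precisely for $\mathfrak p<1+\frac1{2L_j}$, so $\partial_x\Phi$ and $\partial_y\Phi$ are $\mathfrak p$-integrable near $0$ for $\mathfrak p<1+\frac1{K_0}$ with $K_0=\max_j 2L_j$; and for $\mathfrak p\ge1+\frac1{K_0}$ they are not, since on the tube of a factor $j_0$ with $2L_{j_0}=K_0$ the term $|\partial\Phi_{j_0}|\gtrsim|x|^{K_0}/(u_{j_0}^2+|x|^{2K_0})$ dominates every other term (each factor with smaller exponent is $O(|x|^{-2L_j})=o(|x|^{-K_0})$ there), and its integral diverges. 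Transferring back to $\T^2$ and taking the worst singularity yields $\partial_z\phi\in L^{\mathfrak p}(\T^2)\iff\partial_w\phi\in L^{\mathfrak p}(\T^2)\iff\mathfrak p<1+\frac1K$.

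The main obstacle is the two-sided local estimate on $|\partial\Phi_j|$ above --- in particular the lower bound on the tube, and the verification that the higher Puiseux data $\psi_j$, the multiplicities $M_j$, and possible clustering of several $P_j$ sharing a leading term $q_j$ but with different $L_j$ do not spoil the leading behavior $\asymp|x|^{2L_j}/(u_j^2+|x|^{4L_j})$. This is exactly where Theorem~\ref{purestable} (pure stable type, $\Im\psi_j(0)>0$, equivalently $p$ coprime with $\bar p$) is indispensable: if $p$ shared a factor with $\bar p$ there would be a real curve of zeros and no derivative would lie in any $L^{\mathfrak p}$ with $\mathfrak p\ge1$. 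This estimate is the substantive content of the Bickel--Pascoe--Sola analysis \cite{BPS1},\cite{BPS2}; the localization, the conformal transfer, and the final one-variable integral are routine.
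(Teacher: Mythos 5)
The survey does not actually reproduce a proof of Theorem~\ref{derint}; it cites \cite{BPS1},\cite{BPS2} and describes their method as organizing the unimodular level sets $\{\phi=\alpha\}$ into \emph{horn regions}. Your route is genuinely different in its scaffolding: you localize, Cayley-transfer to $\uhp^2$, invoke the later Puiseux factorization of Theorem~\ref{purestable} (from \cite{BKPS}, which post-dates \cite{BPS1,BPS2}) to split $p=u\prod_j P_j$, and then estimate $|\partial\Phi|$ pointwise and integrate. This is a legitimate and arguably cleaner reduction once Theorem~\ref{purestable} is available, since it collapses the geometry into the explicit model $u_j^2+|x|^{4L_j}$ and a one-variable integral; the horn-region picture is, in effect, baked into the data $(q_j,2L_j)$. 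The final computation, the localization, and the observation that derivatives transform by bounded factors under the Cayley map are all fine, as is the convention $K=0$ in the singularity-free case.

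The genuine gap is the lower bound on $|\partial\Phi|$ on the widest tube, and your stated reason for it (``$|\partial\Phi_{j_0}|$ dominates every other term'') does not hold in general. If two distinct $P_j$ share the same $q_j$ and the same $2L_j=K_0$ (they can, differing only in $\psi_j$ or $M_j$), their tubes coincide and $|\partial\Phi_j|\asymp|\partial\Phi_{j_0}|$ there, so a triangle-inequality comparison cannot rule out cancellation in $\sum_j\Phi/\Phi_j\cdot\partial\Phi_j$. What closes the gap is a positivity/no-cancellation fact you never state: on $\R^2$ one has $\partial_y\Phi/\Phi=-2i\,\Im\bigl(\partial_y\log P\bigr)$, and writing $P=u\prod_\ell\bigl(y-y_\ell(x)\bigr)$ over all Puiseux branches, each summand satisfies
\[
\Im\frac{1}{y-y_\ell(x)}=\frac{\Im y_\ell(x)}{|y-y_\ell(x)|^2}<0\quad(x\ne 0\ \text{small, real}),
\]
since every branch lies in the closed lower half-plane ($p\ne 0$ on $\uhp^2$) and pure stable type gives strict inequality. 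Thus $|\Im(\partial_y\log P)|$ is a sum of nonnegative terms and is bounded below by the contribution of any single branch; no cancellation can occur, and the dominant branch genuinely controls the integral from below. A similar argument, slightly more delicate because $\partial_x P_j$ has the lower-order real term $q_j'$, handles $\partial_x\Phi$. Without this observation the proposal's divergence claim for $\mathfrak p\ge 1+\frac1K$ is unjustified; with it, the rest of your argument goes through.
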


Crucial to the approach of \cite{BPS1}, \cite{BPS2}
was showing that the branches of the level sets \eqref{level}
occur in bundles called \hl{horn regions}.  Figure \ref{bundles}
shows various level sets of a rational inner function.
Note how they occur in two horn-shaped regions or bundles.
These bundles are directly connected to the
polynomials $q_1(x),\dots, q_k(x)$ in Theorem \ref{purestable}.
Necessarily, for a rational inner function $\phi$, we have $\frac{\partial \phi}{\partial z}$
belonging to $L^1(\T^2)$.  This fact even extends to several
variables.  In two variables, the paper \cite{Pascoe-cara} shows
how to construct polynomials with given global contact order
so that conditions in Theorem \ref{derint} can actually occur.

\begin{figure} 
\includegraphics[scale = .9]{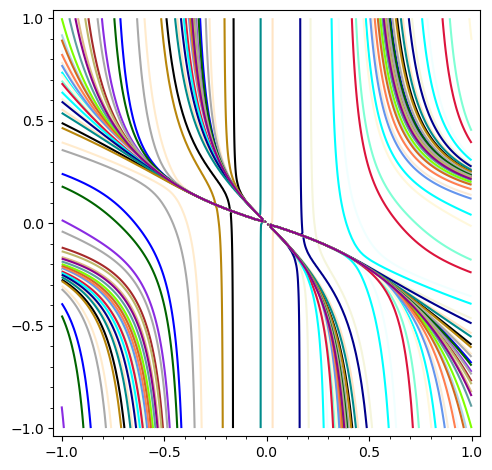}
\caption{Various level sets for a rational inner function}
\label{bundles}
\end{figure}

Finally, in the preprint \cite{gKint} we gave a complete
characterization of when a rational function $Q/p$
(where $p$ has no zero in $\uhp^2$ and no factors in common
with $\bar{p}$)
is locally in $L^{\mathfrak{p}}$ in $\R^2$ near the origin.
The main result involves checking the behavior of $Q$ along
curves related to the polynomials $q_1(x),\dots, q_k(x)$ from
Theorem \ref{purestable}.  
A few of the easiest to state offshoots are below.
In the next theorem, we consider the space of $Q/p$ that are locally square integrable:
 \[
 \mathcal{I}_p = \left\{ Q(x,y)\in \C\{x,y\}: \int_{(-\epsilon,\epsilon)^2} |Q/p|^2 dxdy <\infty \text{ for some } \epsilon >0\right\}.
 \]
 Notice that both $p$ and $\bar{p}$ belong to $\mathcal{I}_p$ so it makes sense to quotient
 $\mathcal{I}_p$ by the ideal generated by $p$, $\bar{p}$; namely $(p,\bar{p})$.

\begin{theorem}
 Assume $p$ satisfies the setup and conclusion of Theorem \ref{purestable}.
  Then,
  \[
  \dim  \mathcal{I}_p/(p,\bar{p}) = \frac{1}{2} \dim \C\{x,y\} /(p,\bar{p}).
  \]
  \end{theorem}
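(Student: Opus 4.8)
The plan is to compute both sides of the claimed dimension equality by reducing to a local model and then comparing dimensions factor by factor in the Puiseux factorization supplied by Theorem \ref{purestable}. First I would record two preliminary facts. One: the finite-dimensionality of $\C\{x,y\}/(p,\bar p)$, which holds precisely because $p$ and $\bar p$ have no common factor, so their zero sets meet at $(0,0)$ in an isolated point of multiplicity equal to $\dim_\C \C\{x,y\}/(p,\bar p)$ (the local intersection multiplicity $I_{(0,0)}(p,\bar p)$). Two: by Theorem \ref{bounded} together with the bracket polynomial $[p]$ from \eqref{pbrack}, the space $\mathcal I_p$ depends only on the local data $2L_1,\dots,2L_k, M_1,\dots, M_k, q_1,\dots,q_k$, since $[p]/p$ is bounded above and below near $0$ in $\uhp^2$; in fact I expect $\mathcal I_p = \mathcal I_{[p]}$ and $(p,\bar p)$ has the same colength as $([p],\overline{[p]})$, so the entire statement reduces to the model polynomial $[p] = \prod_{j=1}^k (y + q_j(x) + i x^{2L_j})^{M_j}$.

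Next I would handle the model case. For $[p]$, both $(p,\bar p)$ and $\mathcal I_p$ split multiplicatively along the $k$ branches, so it suffices to treat a single factor $g(x,y) = (y + q(x) + i x^{2L})^M$ with $q \in \R[x]$, $\deg q < 2L$, $q(0)=0$, $q'(0)>0$. After the analytic change of coordinates $Y = y + q(x)$ (which is a unit in $\C\{x,y\}$ and preserves all the relevant spaces and ideals), $g$ becomes $(Y + i x^{2L})^M$ and $\bar g$ becomes $(Y - i x^{2L})^M$. Now $\C\{x,Y\}/\big((Y+ix^{2L})^M, (Y - i x^{2L})^M\big)$ is a complete intersection whose colength is the product of the $x$-degrees, namely $2LM \cdot M = 2LM^2$; more carefully, $(Y+ix^{2L})^M - (Y-ix^{2L})^M$ and $(Y+ix^{2L})^M + (Y-ix^{2L})^M$ generate the same ideal, and an elimination/resultant computation in $Y$ gives intersection multiplicity $2 L M^2$. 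For $\mathcal I_p$ in this coordinate, $Q/g \in L^2$ near $0$ iff $Q$ vanishes to the appropriate order along the curve $Y = -i x^{2L}$ (and its conjugate); quantifying this with the explicit curve substitution $x \mapsto t$, $Y \mapsto -i t^{2L}$ shows $\mathcal I_g$ is the ideal of numerators whose order of vanishing along $Y + i x^{2L}=0$ is at least $\lceil M/2 \rceil$ — equivalently $\mathcal I_g = \big((Y+ix^{2L})^{\lceil M/2\rceil},\ (Y-ix^{2L})^{\lceil M/2\rceil}\big) + (g,\bar g)$ up to the unit — and a parallel resultant computation yields $\dim \mathcal I_g/(g,\bar g) = L M^2 = \tfrac12 \dim \C\{x,Y\}/(g,\bar g)$. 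Summing the branch contributions (and verifying the cross terms between distinct branches pair up correctly, which they do because distinct branches have $q_i \not\equiv q_j$ so contribute transversally) gives the model-case identity.

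Finally I would assemble the pieces: $\dim \mathcal I_p/(p,\bar p) = \dim \mathcal I_{[p]}/([p],\overline{[p]}) = \tfrac12 \dim \C\{x,y\}/([p],\overline{[p]}) = \tfrac12 \dim \C\{x,y\}/(p,\bar p)$, where the first and last equalities use the boundedness of $[p]/p$ to identify the spaces and the intersection multiplicities, and the middle equality is the model computation. The main obstacle I anticipate is the careful bookkeeping in the $L^2$ characterization: pinning down exactly which order of vanishing along each Puiseux branch is forced by square-integrability (the $\lceil M/2\rceil$ above, and whether it is truly symmetric in $g$ and $\bar g$), and then verifying that this cuts the intersection multiplicity exactly in half rather than merely bounding it — this requires the precise local $L^{\mathfrak p}$ criterion from \cite{gKint} rather than just Theorem \ref{bounded}, and the half-integer rounding needs to be checked to cancel correctly when summed against the even number $2LM^2$.
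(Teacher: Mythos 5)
Your overall reduction strategy (pass to the model polynomial $[p]$, split along the $k$ branches, change coordinates so a single branch becomes $g=(Y+ix^{2L})^M$) is plausible in spirit, but the concrete local $L^2$ criterion you propose is wrong, and it fails already in the simplest case. Take $M=1$, so $g=Y+ix^{2L}$, $\bar g=Y-ix^{2L}$, and $(g,\bar g)=(Y,x^{2L})$ with colength $2L$. Your description $\mathcal{I}_g=\bigl((Y+ix^{2L})^{\lceil M/2\rceil},(Y-ix^{2L})^{\lceil M/2\rceil}\bigr)+(g,\bar g)$ gives $\mathcal{I}_g=(Y,x^{2L})=(g,\bar g)$, hence $\dim\mathcal{I}_g/(g,\bar g)=0$, whereas the theorem demands $\tfrac12\cdot 2L=L$. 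A direct computation shows what is missing: on $\R^2$, $|g|^2=Y^2+x^{4L}$, and
\[
\int_{-\epsilon}^{\epsilon}\frac{|x^L|^2}{Y^2+x^{4L}}\,dY
= \frac{x^{2L}}{x^{2L}}\cdot 2\arctan\!\left(\frac{\epsilon}{x^{2L}}\right)\longrightarrow \pi
\]
as $x\to 0$, so $x^L/(Y+ix^{2L})\in L^2$ near $0$ although $x^L\notin (Y,x^{2L})$. In fact $\mathcal{I}_g=(Y,x^L)$, which contains elements that do not vanish along either branch $Y=\mp ix^{2L}$ to any positive order. The point is that square-integrability of $Q/g$ is \emph{not} governed by order of vanishing of $Q$ along the complex curve $g=0$; it is a mixed condition in $(x,Y)$ that allows fractional $x$-orders relative to $2L$. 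The same error propagates for $M\geq 2$: for $M=2$ your ideal has colength $2L$, giving quotient dimension $8L-2L=6L$, but the correct answer is $4L$, with $\mathcal{I}_g=(Y^2,Yx^L,x^{3L})$ (colength $4L$) obtained by the analogous analysis $\int Y^{2b}/(Y^2+x^{4L})^M\,dY \asymp x^{2L(2b-2M+1)}$. So the ``$\lceil M/2\rceil$'' heuristic must be abandoned and replaced by the genuine local $L^{\mathfrak{p}}$ criterion from \cite{gKint}, which is exactly what you flag as the anticipated obstacle; as stated, that step does not go through.

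A secondary gap: you assert that $(p,\bar p)$ and $([p],\overline{[p]})$ have equal colength because $[p]/p$ is bounded above and below near $0$. Boundedness of the ratio in $\uhp^2$ does not by itself control an algebraic intersection multiplicity --- $[p]$ is not a unit multiple of $p$. The equality $I_0(p,\bar p)=I_0([p],\overline{[p]})$ is true, but it requires a separate branch-by-branch argument (pairing each Puiseux branch of $p$ with each branch of $\bar p$ and checking that the contact orders depend only on the data $L_j$, $M_j$, $q_j$ and not on the tails $\psi_j$, including the cross terms $j\neq j'$), which you do not supply.
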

  
  Another easy to state result related to Theorem \ref{derint} is the following.
  
  \begin{theorem}[Theorem 1.9 of \cite{gKint}]
  Suppose $f:\DD^2 \to \C$ is bounded and rational.  Then, $\frac{\partial f}{\partial z}, \frac{\partial f}{\partial w}$
  belong to $L^1(\T^2)$.
  \end{theorem}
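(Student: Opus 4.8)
The plan is to localize at the boundary singularities and then run an explicit integral estimate built on the Puiseux description of stable polynomials. Write $f=q/p$ in lowest terms; boundedness forces $p$ to be zero-free on $\DD^2$, and we may assume $p$ has no factor in common with $\tilde p$. (In general $p=p_1p_2$ with $p_1$ coprime to $\tilde p_1$ and $p_2$ a unimodular constant times $\tilde p_2$; such a $p_2$ is either zero-free on $\overline{\DD^2}$, in which case it contributes a factor bounded above and below near $\T^2$ and may be dropped, or else it has a curve of zeros on $\T^2$, which would force $f$ to be unbounded along that curve since $q$ is coprime to $p_2$.) With $p$ coprime to $\tilde p$, the identity $|p|=|\tilde p|$ on $\T^2$ together with B\'ezout's theorem force the set $Z(p)\cap\T^2$ to be finite; off this finite set $f$ is real-analytic near $\T^2$, so $\partial f/\partial z$ is bounded there. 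It therefore suffices to prove $\partial f/\partial z\in L^1$ on $\T^2\cap V$ for a small neighborhood $V$ of each $\zeta\in Z(p)\cap\T^2$.

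Fix such a $\zeta$ and Cayley-transform each coordinate so that $\zeta\mapsto 0$, passing to $F=Q/P$ which is bounded on $\uhp^2$ near $0$, with $P\in\C[x,y]$ having no zeros in $\uhp^2$, $P(0)=0$, and $P$ coprime to $\bar P$ (reflection becomes coefficient conjugation, so coprimality is preserved). Under the Cayley transform $\partial/\partial z$ becomes $\partial/\partial x$ up to a smooth nonvanishing factor and arclength on $\T$ becomes a smooth positive density on $\R$, so the goal becomes $\int_{\R^2\cap B_\epsilon}|\partial_x F|\,dx\,dy<\infty$. Now two inputs do the work. By Theorem~\ref{purestable}, $P$ has the stated product form with data $q_1,\dots,q_k\in\R[x]$ ($q_j(0)=0$, $q_j'(0)>0$), exponents $2L_1,\dots,2L_k$, and multiplicities $M_1,\dots,M_k$; a routine consequence (using $\Im\psi_j(0)>0$ for the lower bound) is the comparison
\[
|P(x,y)|\;\asymp\;\prod_{j=1}^{k}\bigl(|y+q_j(x)|+|x|^{2L_j}\bigr)^{M_j}\qquad\text{for }(x,y)\in\R^2\text{ near }0.
\]
And by Theorem~\ref{bounded}, $Q$ lies in the product ideal $I:=\prod_{j=1}^{k}(y+q_j(x),x^{2L_j})^{M_j}$ in $\C\{x,y\}$.

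The heart of the proof is a bookkeeping estimate for $\partial_x F=Q_x/P-(Q/P)(P_x/P)$. Since $F$ is bounded, $|Q/P|\le C$; differentiating the product form of $P$ gives $|P_x/P|\lesssim 1+\sum_j(|y+q_j(x)|+|x|^{2L_j})^{-1}$, the numerators produced being $q_j'(x)+O(|x|^{2L_j-1})=O(1)$. For $Q_x$ one expands $Q$ over the generators of $I$ and applies the product rule: in each resulting term exactly one block $j_0$ has been differentiated, losing either the factor $y+q_{j_0}(x)$ (replaced by the unit $q_{j_0}'(x)$) or one power of $x^{2L_{j_0}}$ (replaced by $x^{2L_{j_0}-1}$). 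Dividing by $|P|$ and using the displayed comparison, every term of $Q_x/P$ is then dominated by a constant, by $(|y+q_{j_0}(x)|+|x|^{2L_{j_0}})^{-1}$, or by $|x|^{2L_{j_0}-1}(|y+q_{j_0}(x)|+|x|^{2L_{j_0}})^{-1}$ --- for the last case one peels $x^{2L_{j_0}-1}$ off the differentiated $x$-power and observes that what remains still has total $(y+q_{j_0},x^{2L_{j_0}})$-degree $M_{j_0}$. Finally all of these are locally integrable near $0$ in $\R^2$: the substitution $v=y+q_j(x)$ gives, for fixed $x$, $\int_{|v|<c}(|v|+|x|^{2L_j})^{-1}\,dv\asymp 2L_j\log(1/|x|)$, and $\int_{-\epsilon}^{\epsilon}|x|^{\beta}\log(1/|x|)\,dx<\infty$ both for $\beta=0$ and for $\beta=2L_j-1\ge 1$. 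Summing over the finitely many $\zeta$ and transforming back gives $\partial f/\partial z\in L^1(\T^2)$; the $w$-derivative is handled the same way (and is, if anything, simpler, since differentiating in the ``solved-for'' variable never touches an $x^{2L_j}$-type generator).

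The conceptual obstacle I anticipate is recognizing that $\partial_x F$ is genuinely \emph{not} locally bounded near a singularity: it can blow up like $1/|x|$ along the thin horn-shaped regions $\{|y+q_j(x)|\lesssim|x|^{2L_j}\}$, so one cannot prove boundedness of the derivative but must instead capture the exact $1/|x|$-against-width-$|x|^{2L_j}$ trade-off that rescues integrability. Making the ideal bookkeeping on $Q_x$ sharp enough that every term lands on $|x|^{2L_j-1}(|y+q_j|+|x|^{2L_j})^{-1}$ rather than on the lazy bound $1/|x|$ (which is not integrable on $\R^2$) is where the effort concentrates, and it is precisely there that the sharpness of Theorems~\ref{purestable} and~\ref{bounded} --- the contact orders $2L_j$ and the multiplicities $M_j$ --- is used. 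Taking $q=\tilde p$ recovers the $\mathfrak{p}=1$ case of Theorem~\ref{derint}, a useful model to keep in mind.
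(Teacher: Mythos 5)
Your argument is sound and uses exactly the machinery the survey attributes to \cite{gKint}: reduce to the case $p$ coprime to $\tilde p$, localize at the finitely many torus singularities, pass to $\uhp^2$, invoke the comparison $|P|\asymp\prod_j(|y+q_j(x)|+|x|^{2L_j})^{M_j}$ on $\R^2$ coming from Theorem~\ref{purestable} (via $[p]/p$ being bounded above and below), invoke Theorem~\ref{bounded} to place $Q$ in the product ideal, and then run the product-rule bookkeeping on $Q_x/P$ and $P_x/P$ against the horn-region integrals. The survey does not reproduce the proof --- it only states the result and indicates that \cite{gKint}'s method involves tracking $Q$ along the curves $q_1,\dots,q_k$ --- so the most I can say with certainty is that your route is consistent with the described approach, and it is a correct proof.

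One nit to fix in the bookkeeping for the $x$-differentiated term: after you peel $x^{2L_{j_0}-1}$ off $\partial_x(x^{2L_{j_0}\beta_{j_0}})$, what remains is $(y+q_{j_0})^{\alpha_{j_0}}x^{2L_{j_0}(\beta_{j_0}-1)}$, which has $(y+q_{j_0},x^{2L_{j_0}})$-degree $M_{j_0}-1$, not $M_{j_0}$; it is precisely that one-degree deficit (against the full $M_{j_0}$ in $|P|$) that produces the extra factor $(|y+q_{j_0}|+|x|^{2L_{j_0}})^{-1}$ in your stated bound $|x|^{2L_{j_0}-1}(|y+q_{j_0}|+|x|^{2L_{j_0}})^{-1}$, so the bound itself is right but the sentence justifying it is off by one. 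You also correctly identified the conceptual crux: the naive bound $1/|x|$ on these terms is non-integrable, and it is only the paired loss of $x^{2L_j-1}$ from the numerator together with the width-$|x|^{2L_j}$ regularization in the denominator that makes $L^1$ come out. Two further small points worth making explicit if this were to be written up: the reduction eliminating a self-reflective stable factor $p_2$ with a torus zero uses the fact (implicit in the ``real stable type'' case of Theorem~\ref{localparam}) that such a factor forces a real curve of zeros, and the differentiation of the Puiseux factors $x^{2L_j}\psi_j(\mu_j^m x^{1/M_j})$ in $P_x/P$ produces terms of order $|x|^{2L_j-1+1/M_j-1}$ which are still $O(1)$ because $2L_j\ge 2$, so the log-derivative numerators are indeed bounded.
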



Before we leave two variables, it worth highlighting a few other areas where rational inner functions
have appeared.  Recently, several authors have begun studying the \hl{Clark measures}
associated to rational inner functions \cite{Doubtsov}, \cite{Clark1}, \cite{Clark2}.  
This work involves a detailed study both
of the sums of squares decompositions of the previous section as well
as the local geometry from the present section.  
In addition, the dynamics of rational inner mappings (pairs of rational inner functions)
have been studied in \cite{dyn1}, \cite{dyn2}.  One interesting
by-product of \cite{dyn2} is that by composing rational inner functions with
singularities one can produce polynomials with high contact order.  
This leads to a question.

\begin{problem} \label{problemlocal}
Given the data $2L_1,\dots, 2L_k, M_1,\dots, M_k, q_1(x), \dots, q_k(x)$ 
from Theorem \ref{purestable}
can one always construct a polynomial $p\in \C[x,y]$ with
no zeros in $\DD^2$ associated to this data as in Theorem \ref{purestable}?
\end{problem}

\section{Higher dimensions}

In this final section we look at what is known about 
rational inner functions in three or more variables.
As indicated in Section \ref{interp}, if one could understand
interpolation for rational inner functions, it ought to 
be possible to understand interpolation more broadly
but if the answer to Problem \ref{ratinterp} is negative, then
a different approach may be necessary.

It turns out that the natural generalizations of 
Agler's Pick interpolation (Theorem \ref{aglerpick})
and the sums of squares theorem (Theorem \ref{sosthm})
do not hold for three or more variables,
and this is well-known to be connected to operator theoretic 
obstructions.  An analytic function $f:\DD^d \to \D$
satisfies a formula
\begin{equation} \label{aglerdecomp}
1-f(z) \overline{f(w)} = \sum_{j=1}^{d} (1-z_j\bar{w}_j) K_j(z,w)
\end{equation}
where the $K_j$ are positive semi-definite kernels 
if and only if $f$ satisfies an operator inequality:
for every $d$-tuple of commuting strictly
 contractive operators $T = (T_1,\dots,T_d)$
 on a Hilbert space,
 we have $\|f(T)\| \leq 1$.  
 For $d=1$, this inequality follows automatically---this is \hl{von Neumann's inequality} \cite{vN}---and
 for $d=2$ it follows automatically by \hl{And\^{o}'s inequality} \cite{Ando}.  
For $d>2$, it does not follow automatically because there are counterexamples (first discovered
by Varopoulos \cite{varo}).
For this reason it makes sense to study the class of functions for which the operator
inequality holds.

\begin{definition}
We say analytic $f: \DD^d \to \DD$ belongs to the \hl{\emph{Schur-Agler class} }
(or Agler class for short)
if for every $d$-tuple of commuting strictly
 contractive operators $T = (T_1,\dots,T_d)$
 we have $\|f(T)\| \leq 1$. 
 \end{definition}
 
In particular, we can study rational inner functions in the Schur-Agler class.

\begin{theorem}
A rational inner function $\phi(z) = \tilde{p}(z)/p(z)$ on $\DD^d$
belongs to the Schur-Agler class if and only if we have a formula
\[
|p(z)|^2 - |\tilde{p}(z)|^2 = \sum_{j=1}^{d} (1-|z_j|^2) \mathcal{E}_j(z)
\]
where each $\mathcal{E}_j(z)$ is a sum of squared moduli of polynomials.
\end{theorem}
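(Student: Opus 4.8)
The plan is to derive the theorem from the general Schur--Agler characterization recalled just above---namely that an analytic $f:\DD^d\to\DD$ lies in the Schur--Agler class if and only if there are positive semidefinite kernels $K_j$ with $1-f(z)\overline{f(w)}=\sum_{j=1}^d(1-z_j\bar w_j)K_j(z,w)$---combined with the observation that, for a rational inner function, these kernels may be taken of a very special (polynomial) form. One implication is routine; the other carries the real content.

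First I would dispatch the ``if'' direction. Suppose $|p|^2-|\tilde p|^2=\sum_{j=1}^d(1-|z_j|^2)\mathcal{E}_j$ with $\mathcal{E}_j(z)=\sum_\ell|E_{j,\ell}(z)|^2$, $E_{j,\ell}\in\C[z_1,\dots,z_d]$. Each side is a polynomial in $z$ and in $\bar z$ separately, and they agree on the totally real locus where $\bar z$ is the conjugate of $z$; hence they agree after polarizing $\bar z\rightsquigarrow\bar w$, giving
\[
p(z)\overline{p(w)}-\tilde p(z)\overline{\tilde p(w)}=\sum_{j=1}^d(1-z_j\bar w_j)\sum_\ell E_{j,\ell}(z)\overline{E_{j,\ell}(w)}.
\]
Dividing by $p(z)\overline{p(w)}$, legitimate since $p$ has no zeros on $\DD^d$, yields the Agler decomposition $1-\phi(z)\overline{\phi(w)}=\sum_j(1-z_j\bar w_j)K_j(z,w)$ with $K_j(z,w)=\sum_\ell(E_{j,\ell}/p)(z)\,\overline{(E_{j,\ell}/p)(w)}$, each of which is positive semidefinite on $\DD^d$. (The diagonal of the original identity also records that $|\phi|\le1$ on $\DD^d$, so $\phi$ really does map into $\DD$.) By the stated characterization, $\phi$ lies in the Schur--Agler class.

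For the ``only if'' direction, the characterization hands us positive semidefinite kernels $K_j$ on $\DD^d$ with $1-\phi(z)\overline{\phi(w)}=\sum_j(1-z_j\bar w_j)K_j(z,w)$; multiplying by $p(z)\overline{p(w)}$ gives
\[
p(z)\overline{p(w)}-\tilde p(z)\overline{\tilde p(w)}=\sum_{j=1}^d(1-z_j\bar w_j)L_j(z,w),\qquad L_j:=p(z)\overline{p(w)}K_j\succeq 0,
\]
but the $L_j$ are a priori arbitrary positive kernels, not polynomial. The remaining---and essential---task is to replace them by polynomial kernels of bounded degree. The natural route is to produce a \emph{finite-dimensional} unitary transfer-function realization
\[
\phi(z)=A+B\,\Delta(z)\,(I-D\,\Delta(z))^{-1}C,\qquad \Delta(z)=\begin{pmatrix} z_1 I_{k_1} & & \\ & \ddots & \\ & & z_d I_{k_d}\end{pmatrix},
\]
with $U=\begin{pmatrix}A&B\\C&D\end{pmatrix}$ unitary: writing $u(z)=(I-D\Delta(z))^{-1}C$ and letting $P_j$ be the projection onto the $j$th block, unitarity of $U$ forces $1-\phi(z)\overline{\phi(w)}=\sum_j(1-z_j\bar w_j)\langle P_j u(z),P_j u(w)\rangle$, and clearing denominators turns $\mathcal{E}_j(z):=|p(z)|^2\,\|P_j u(z)\|^2$ into a sum of squared moduli of polynomials. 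Such a realization is obtained by taking a (possibly infinite-dimensional) realization supplied by the general realization theorem for the Schur--Agler class (see e.g.\ \cite{AM99}), compressing to its minimal part, and using that $\phi$ is rational to conclude finite-dimensionality; the degree bookkeeping needed to make the blocks of size exactly $\deg_{z_j}p$---so that $\det(I-D\Delta(z))$ equals $p$ up to a constant and no spurious denominator factor survives---is precisely the dictionary between transfer-function realizations and sums-of-squares formulas worked out in \cite{gKKummert}.

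The hard part is this last step. For $d=2$ the passage from membership in the class to a finite-dimensional realization with correctly sized $\Delta$-blocks is exactly Theorem \ref{sosthm}, and is unconditional; for $d\ge 3$ the Schur--Agler hypothesis is indispensable here, since von Neumann's inequality fails and there exist rational inner functions with no finite-dimensional unitary realization at all. Thus the substance of the theorem for $d\ge 3$ lies in verifying both the finite-dimensional reduction under the operator-inequality hypothesis and that the realization can be trimmed so that the output $\mathcal{E}_j$ emerge as genuine polynomial sums of squares rather than $|p|^2-|\tilde p|^2$ multiplied by an extraneous squared modulus.
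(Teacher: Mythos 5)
The ``if'' direction of your argument is fine: polarizing the pointwise identity in $(z,\bar z)$ to $(z,\bar w)$ and dividing by $p(z)\overline{p(w)}$ produces a valid Agler decomposition with positive semidefinite kernels, and the general characterization of the Schur--Agler class then applies. That part matches the standard route.

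The ``only if'' direction, however, leans on a claim you never actually establish and that cannot be waved through: that an infinite-dimensional unitary realization of a rational Schur--Agler function can be ``compressed to its minimal part'' and ``using that $\phi$ is rational'' becomes finite-dimensional. In the multivariable setting rationality alone does not force a minimal realization to be finite-dimensional --- you even acknowledge this yourself by noting that for $d\ge 3$ there are rational inner functions with no finite-dimensional unitary realization at all. So the implication ``Schur--Agler $+$ rational $\Rightarrow$ finite-dimensional realization'' is essentially the theorem being proved, and invoking it (or appealing to the \cite{gKKummert} dictionary, which in that paper is worked out for $d=2$) makes the argument circular. Note also that the theorem as stated does not require a realization with $\det(I-D\Delta)=cp$ or exactly sized blocks; your degree-bookkeeping discussion is aimed at a stronger statement than what is needed.

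The survey does not prove the theorem but attributes it to Cole--Wermer \cite{CW99} and to \cite{gKratag} (where it appears with explicit degree bounds). The argument there for the hard direction is more direct and avoids realization theory altogether: starting from the Agler decomposition $1-\phi(z)\overline{\phi(w)}=\sum_j(1-z_j\bar w_j)K_j(z,w)$, multiply by $p(z)\overline{p(w)}$ to get $p(z)\overline{p(w)}-\tilde p(z)\overline{\tilde p(w)}=\sum_j(1-z_j\bar w_j)L_j(z,w)$ with $L_j$ a priori only analytic positive kernels, and then show that the $L_j$ are \emph{automatically} polynomial of bounded degree. The mechanism is a Taylor-coefficient bookkeeping argument: writing $L_j(z,w)=\sum_{\alpha,\beta}c^{(j)}_{\alpha\beta}z^\alpha\bar w^\beta$, the left-hand side has multidegree $\le n=\deg p$ in $z$ and in $\bar w$, the identity $\sum_\alpha\bigl(|\widehat p(\alpha)|^2-|\widehat{\tilde p}(\alpha)|^2\bigr)=0$ (since $\tilde p$ is a conjugated rearrangement of $p$), and telescoping along coordinate directions force the diagonal entries $c^{(j)}_{\alpha\alpha}$ to vanish for $\alpha$ out of range; positive semidefiniteness of the coefficient matrix then kills the off-diagonal entries via Cauchy--Schwarz. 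This yields polynomial kernels of bounded degree, hence the desired finite sums of squares of polynomials, and only \emph{afterwards} a finite-dimensional realization as a byproduct. That positivity-driven truncation step is the actual content your sketch is missing.
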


The concept of this theorem is essentially due to Cole-Wermer \cite{CW99},
while it is explicitly stated in \cite{gKratag} with explicit degree bounds for
the terms in the sums of squares terms.  
Certain low-degree or regular rational inner functions automatically belong
to the Agler class. 
\begin{itemize}
\item If $p(z_1,z_2,z_3)$ with no zeros in $\DD^3$ 
has multidegree $(n,1,1)$ then $\tilde{p}/p$ is in the Agler class;
this is proven in \cite{gKtridisk} with some elaborations in \cite{BK}.
\item If $p(z_1,z_2,z_3)$ has multidegree $(n,m,1)$
and no zeros in $\DD^3$,
then there is a monomial $z^{\alpha}$ such that $z^{\alpha} \tilde{p}/p$
belongs to the Agler class. See \cite{gKtridisk}.
(Note the theorem in \cite{gKtridisk} states $p$ has no
zeros in $\cd^3$ but this is not essential if we use a theorem
of Scheiderer \cite{scheid} on non-negative trigonometric polynomials.)
\item In \cite{drexelcon}, it is proven that if $p\in \C[z_1,\dots,z_d]$
has no zeros on $\cd^d$ then there is a monomial $z^{\alpha}$
such that $z^{\alpha} \tilde{p}/p$
belongs to the Agler class.
\item In the paper \cite{gKsym} we gave a concrete checkable
condition for when polynomials $p(z_1,\dots, z_d)$ with no zeros 
in $\DD^d$ which are multiaffine (meaning they have degree
one in every variable) and are symmetric give rise
to $\phi = \tilde{p}/p$ belonging to the Agler class.
The following is not known as far as we know.
\end{itemize}

\begin{problem}
Suppose $p(z_1,\dots, z_d)$ has no zeros 
in $\DD^d$, is multiaffine and symmetric.
Does $\phi = \tilde{p}/p$ automatically belong to the Agler class?
\end{problem}

The answer is probably no, but if true it would represent
a novel strengthening of the \hl{Grace-Walsh-Szeg\H{o} theorem}.

One can define an \hl{Agler norm} for analytic $f:\DD^d \to \C$ via
\[
\|f\|_{A} = \sup_{T} \|f(T)\|
\]
where the supremum is taken over commuting strictly contractive $d$-tuples.
We could have $\|f\|_{A} = \infty$.
A phenomenon noted in \cite{drexelschwarz}
is that there exists a rational inner function $\phi$ in three variables
with $\|\phi\|_{A}>1$ and there exists a monomial $z^{\alpha}$ such that
$\|z^{\alpha} \phi\|_{A} = 1$.   

A basic open question suggested by V. Vinnikov is the following:
\begin{problem} \label{ratagler}
Does every rational inner function have finite Agler norm?
\end{problem}

By absolute convergence properties, if $p$ has no zeros on $\cd^d$ then
necessarily $\tilde{p}/p$ has finite Agler norm so
the question is about rational inner functions with boundary singularities.
This leads to further motivation for studying boundary singularities of rational functions.

The paper \cite{BPS3} presents a detailed study of 
the boundary level sets of rational inner functions in higher dimensions.
One of the key results demonstrates that integrability of the derivative 
of a rational inner function depends on the rate that
the zero set of $p$ approaches the boundary.
This paper also presents a variety of interesting examples
where the zero set of $p$ intersected with $\T^3$ has either
an isolated zero or a curve of singularities.  There seems to 
be no obvious connection between coarse boundary nature and
derivative integrability.
The paper \cite{Bergqvist} has some further results
on integrability in higher dimensions.
A basic question coming out of \cite{BPS3} is the following.
\begin{problem}
The exact derivative integrability cut-offs are known for rational inner functions
in two variables by Theorem \ref{derint}.  What are the possible integrability
cut-offs in three or more variables?
Specifically, given a rational inner function $\phi$ on $\DD^d$ for $d>2$,
what are all of the possible values of
\[
\sup\left\{ \mathfrak{p}>0 : \frac{\partial \phi}{\partial z_1} \in L^{\mathfrak{p}}(\T^d)\right\} \text{ ?}
\]
\end{problem}

Finally, the preprint \cite{BKPS2} studies the problem of characterizing
$q/p$ in three variables that are bounded near a boundary singularity in
the simplest case, namely when the zero set of $p$ is smooth at
the boundary singularity.  There remain many open questions in
three or more variables.

\section{Acknowledgements}
I would like to thank Alan Sola for offering comments on
a draft of this paper (on extremely short notice!). Also, thanks
to all the collaborators and friends who have contributed
to the interesting (to me) topic of rational inner functions.
Finally, thank you to the anonymous referee for numerous
helpful suggestions.
An additional thanks goes to Rongwei Yang for pointing
out a typo in the proof of Prop 2.


 \bibliography{RIFbib}{}
 \bibliographystyle{spmpsci}
 

\end{document}